\numberwithin{equation}{section}
\theoremstyle{plain}
\newtheorem{thm}{Theorem}[section]
\newtheorem{theorem}[thm]{Theorem}
\newtheorem{lemma}[thm]{Lemma}
\newtheorem{proposition}[thm]{Proposition}
\theoremstyle{definition}
\newtheorem{remark}[thm]{Remark}
\newtheorem{definition}[thm]{Definition}
\newtheorem{example}[thm]{Example}
\newtheorem{question}[thm]{Question}
\numberwithin{equation}{section}
\newcommand{\sO}{{\mathcal O}}
\newcommand{\C}{{\mathbb C}}
\renewcommand{\P}{{\mathbb P}}
\newcommand{\Q}{{\mathbb Q}}
\newcommand{\R}{{\mathbb R}}
\newcommand{\Z}{{\mathbb Z}}
\newcommand{\id}{{\rm id\hspace{.1ex}}}
\newcommand{\Aut}{{\rm Aut\hspace{.1ex}}}
\title[Fibered Calabi-Yau threefolds]{Fibered Calabi-Yau threefolds with relative automorphisms of positive entropy and $c_2$-contractions} 
\author{Keiji Oguiso}
\address{Mathematical Sciences, the University of Tokyo, Meguro Komaba 3-8-1, Tokyo, Japan, and National Center for Theoretical Sciences, Mathematics Division, National Taiwan University, 
Taipei, Taiwan}
\email{oguiso@ms.u-tokyo.ac.jp}
\thanks{The author is partially supported by JSPS Grant-in-Aid (S) 15H05738, JSPS Grant-in-Aid (B) 15H03611 and NCTS scholar program.}
\subjclass[2010]{14J32, 14J50}
\begin{document}

\maketitle

\begin{abstract}
We show that an abelian fibered Calabi-Yau threefold with a positive entropy automorphism preserving the fibration is unique up to isomorphisms as fibered varieties. We also give a fairly explicit structure theorem of an elliptically fibered Calabi-Yau threefold with a birational automorphism of "positive entropy" preserving the fibration.
\end{abstract}

\section{Introduction}

There are a lot of interesting projective surface automorphisms of positive entropy (see e.g. \cite{Mc16}, \cite{Ue16}, \cite{OY20} and references therein). Target surfaces are necessarily birational to either $\P^2$, K3 surfaces, abelian surfaces or Enriques surfaces by a result of Cantat (\cite{Ca99}). In \cite{Og14}, we discussed some difficulty to construct "interesting" biregular automorphisms of positive entropy of a smooth projective vaeriety of dimension greater than or equal to three. There we considered that primitive automorphisms of positive entropy introduced by Zhang \cite{Zh09} are interesting ones and gave such a Calabi-Yau threefold example and a rational threefold example found first by Truong and me \cite{OT15} (see also \cite{Og19} for further progress). 

\medskip

In this paper, we continue to discuss the same kind of problems but from a completely different view point. We study fibered Calabi-Yau threefolds with equivariant automorphisms of positive entropy, hence, non primitive ones. The aim of this paper is to point out that the existence of such an automorphism gives rather strong constraint of the target Calabi-Yau threefold; the existence of such an automorphism uniquely determines the fibered Calabi-Yau threefold when it is an abelian fibered Calabi-Yau threefold (Theorem \ref{mainthm1}) and gives fairly explict concrete structure of the fibered Calabi-Yau threefolds when they are elliptically fibered threefolds (Theorem \ref{mainthm2}), with various examples related (Examples \ref{ex2}, \ref{ex22}). This is not the case for pluricanocical fibrations and maximally rational connected fibrations (Examples \ref{ex2} and \ref{ex3}).

\medskip

Unless stated otherwise, we work over $\C$. In order to state our main results, we fix a few terminologies first.

We denote by $N^1(V)$ the free $\Z$-module of finite rank generated by the numerically equivalence classes of Cartier divisors on a normal projective variety $V$. Then $\Aut (V)$ acts naturally on $N^1(V)$. We denote $N^1(V) \otimes_{\Z} \Q$ (resp. $N^1(V) \otimes_{\Z} \R$) by $N^1(V)_{\Q}$ (resp. $N^1(V)_{\R}$). 
  
We call a surjective morphism $f : V \to B$ from a normal projective variety $V$ to a normal projective variety $B$ with connected fibers a {\it fiber space}, a fibration, or a contraction. Two fiber spaces $f : V \to B$ and $f' : V' \to B'$ are isomorphic (resp. birationally isomorphic) if there are isomorphisms $\tau_V : V \to V'$ and $\tau_B : B \to B'$ (resp. birational maps $\tau_V : V \dasharrow  V'$ and $\tau_B : B \dasharrow B'$) such that $\tau_B \circ f = f' \circ \tau_V$. 

Let $f : V \to B$ be a fiber space. We denote by $\Aut (V)$ the group of biregular automorphisms of $V$ and define its subgroups $\Aut (f)$ and $\Aut (V/B)$ by
$$\Aut (f) := \{g \in \Aut (V)\,|\, f \circ g = g_B \circ f\,\, \exists g_B \in \Aut (B)\},$$
$$\Aut (V/B) := \{g \in \Aut (V)\,|\, f \circ g = f\} \subset \Aut (f).$$ 
We define ${\rm Bir}\, (f)$ and ${\rm Bir}\, (V/B)$ by replacing $\Aut (V)$ and $\Aut (B)$ above by ${\rm Bir}\, (V)$ and ${\rm Bir}\, (B)$, the groups of birational automorphisms of $V$ and $B$. We call a group $G$ virtually abelian if $G$ has an abelian subgroup of finite index.

An automorphism $g \in \Aut (V)$ of a smooth projective variety $V$ is {\it of positive entropy} if and only if the first dynamical degree $d_1(g)$ of $g$, which is equal to the spectral radius of $g^*|_{N^1(V)}$ when $g \in \Aut (V)$, is strictly great than $1$. Otherwise, we have $d_1(g) =1$ and we say that $g$ is of zero entropy. See the original paper \cite{DS05} and a survey \cite{Og14} and references therein, for the precise definition of entropy, dynamical degrees and their basic properties that we need in this paper. 

\medskip

We call a smooth projective threefold $X$ a {\it Calabi-Yau threefold} if $\sO_X(K_X) \simeq \sO_X$ and $\pi_1(X^{{\rm an}}) = \{1\}$, where $X^{{\rm an}}$ is the underlying complex manifold of $X$. Let $f : X \to B$ be a fibration from a Calabi-Yau threefold $X$. Then smooth fibers of $f$ are either K3 surfaces or abelian surfaces and $B = \P^1$ if $\dim B = 1$, while smooth fibers are elliptic curves and $B$ is a rational surface with at most klt as its singularities if $\dim B = 2$ (\cite{Og93}). We call $f$ an abelian fibration if general fibers of $f$ are abelian surfaces and an elliptic fibration if general fibers of $f$ are elliptic curves. We do not assume the existence of rational section. 

There are many Calabi-Yau threefolds (See e.g. \cite[Part II]{GHJ03}). Let us next recall a very special Calabi-Yau threefold $X_3$ with an abelian fibration, which will play a curcial role in this paper:

\begin{example}\label{ex1} Let $E_{\omega}$ be the elliptic curve of period $\omega := \frac{-1 + \sqrt{-3}}{2}$ and 
$$\pi_3 : X_3 \to \overline{X}_3$$ 
be the blow up at the maximal ideals of the singular points of $\overline{X}_3$ of the quotient variety 
$$\overline{X}_3 := E_{\omega}^3/\langle \omega \cdot {\rm id}_{E_{\omega}^3} \rangle$$
of the product threefold $E_{\omega}^3$. Then $X_3$ is a Calabi-Yau threefold (\cite{Be83}). We regard $X_3$ as an abelian fibered Calabi-Yau threefold by the morphism 
$$\varphi_3 : X_3 \xrightarrow{\pi_3} \overline{X}_3 \xrightarrow{\overline{{\rm pr}}_3} E_{\omega}/\langle \omega \cdot {\rm id}_{E_{\omega}} \rangle = \P^1$$
induced from the third projection ${\rm pr}_3 : E_{\omega}^3 \to E_{\omega}$ and $\pi_3$. Note that $\varphi_3 : X_3 \to \P^1$ has an automorphism $g \in \Aut(\varphi_3)$ of positive entropy. For instance, the automorphism $g \in \Aut (\varphi_3)$ induced from $g_2 \times {\rm id}_{E_{\omega}} \in \Aut ({\rm pr}_3)$, where $g_2 \in \Aut (E_{\omega}^2)$ is given by the matrix
$$\left(
    \begin{array}{cc}
      1 & 1\\
      1 & 0\\
\end{array}
  \right)\,\, ,\,\, {\rm i.e.,}\,\, g_2(P, Q) = (P+Q, P)$$
is such an automorphism. Indeed, one has 
$$d_1(g) = d_1(g_2 \times {\rm id}_{E_{\omega}}) = d_1(g_2) = (\frac{1+\sqrt{5}}{2})^2 >1.$$
Moreover, $\Aut (E_{\omega}^2)$ is isomorphic to a subgroup of $\Aut (\varphi_3)$ via 
$$\Aut (E_{\omega}^2) \ni h \mapsto h \times {\rm id}_{E_{\omega}} \in \Aut (\pi_3),$$
and we have 
$${\rm GL}_2(\Z) \subset \Aut (E_{\omega}^2) \subset \Aut (\pi_3).$$ 
In particular, $\Aut (\varphi_3)$ is not a virtaully abelian group.
\end{example}

Our first main result is the following:

\begin{theorem}\label{mainthm1} Let $f : X \to B$ be an abelian fibered Calabi-Yau threefold. Assume that there is $g \in \Aut (f)$ of positive entropy. Then $f : X \to B$ is isomorphic to $\varphi_3 : X_3 \to \P^1$ in Example \ref{ex1} as fiber spaces. In particular, any $g \in \Aut (f)$ is of zero entropy unless $X \simeq X_3$.
\end{theorem}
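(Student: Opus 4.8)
The plan is to combine the structural restrictions on abelian fibrations over Calabi--Yau threefolds with the rigidity forced by the automorphism being \emph{biregular} (not merely birational) of positive entropy; the latter hypothesis is what separates this uniqueness statement from the flexible birational situation of Theorem \ref{mainthm2}.

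First I would pin down the base. Since $f$ is an abelian fibration its general fibers are abelian surfaces, so by the classification of fibered Calabi--Yau threefolds recalled above (\cite{Og93}) we automatically have $\dim B = 1$ and $B = \P^1$. Hence $g_B \in \Aut(\P^1) = \mathrm{PGL}_2(\C)$, and since $N^1(\P^1) = \Z$ the base automorphism acts trivially there, so $d_1(g_B) = 1$. Next I would locate the positive entropy in the fiber direction: the class $[F] = f^*\mathcal{O}_{\P^1}(1)$ spans a $g^*$-invariant line in $N^1(X)$ on which $g^*$ is trivial, and the relative dynamical degree formalism for $g \in \Aut(f)$ gives $d_1(g) = \max\bigl(d_1(g_B),\, d_1(g \mid f)\bigr) = d_1(g \mid f)$. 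Thus $g$ has positive entropy exactly when the induced automorphism of the generic abelian surface fiber $A_\eta = X_\eta$ over $K = \C(\P^1)$ is hyperbolic.

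The heart of the matter is the third step, where biregularity must be exploited. An abelian surface carrying a hyperbolic automorphism has a large (indefinite) endomorphism algebra; concretely, up to isogeny the fiber is a self-product $E \times E$, on which $(P,Q) \mapsto (P+Q,P)$ realizes the golden-ratio-squared entropy. The point is that on a Calabi--Yau model the naive fiberwise version of such a map is in general only \emph{birational}: this is precisely the twisting phenomenon governing Theorem \ref{mainthm2}. Demanding that $g$ be genuinely biregular of positive entropy should force rigidity. I would show that $g^*$ preserves the nef and movable cones of $X$ and fixes the ray spanned by the relevant $c_2$-contraction (the tool flagged in the title), and that an infinite order hyperbolic action compatible with the fibration over $\P^1$ cannot coexist with nontrivial variation of the fibers; hence the abelian fibration is isotrivial with a fixed fiber $A$ admitting a biregular hyperbolic automorphism, and no Mordell--Weil twisting survives.

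Finally I would invoke the Calabi--Yau condition to pin everything down. The biregular action on the one-dimensional space $H^{3,0}(X) = \C\,\omega_X$ is by a root of unity, and combined with isotriviality and the fact that the base must be the \emph{rational} curve $\P^1$, this forces the fiber to be $E_\omega^2$ with $E_\omega$ the $j=0$ curve: the order three symmetry producing $\P^1 = E_\omega/\langle \omega \rangle$ is exactly what makes the quotient Calabi--Yau and rigid ($h^{2,1}(X) = 0$). Reconstructing $X$ as the crepant resolution of $E_\omega^3/\langle \omega \cdot \mathrm{id}\rangle$ then yields an isomorphism of fiber spaces $f \cong \varphi_3$, and consequently every positive entropy element of $\Aut(f)$ lives on $X_3$. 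I expect the main obstacle to be the third step: upgrading the local, Hodge-theoretic rigidity of the generic fiber to a \emph{global} biregular identification of fiber spaces, i.e. controlling the singular fibers, the Mordell--Weil group, and the choice of minimal model so that the only twisting that could occur is ruled out by biregularity, landing anything else in the birational setting of Theorem \ref{mainthm2}.
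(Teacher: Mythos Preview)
Your route diverges substantially from the paper's and, as written, has a genuine gap at the final reconstruction step.

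The paper does not argue via isotriviality or a Hodge-theoretic identification of the generic fiber. Instead it proceeds numerically on $N^1(X)_{\R}$. After replacing $g$ by a power so that $g_B=\id_{\P^1}$ (using that $f$ has $\ge 3$ singular fibers), one takes nef Perron--Frobenius eigenvectors $D_1,D_2$ for $g^*$ with eigenvalues $d_1(g)^{\pm 1}$; the eigenvalue condition forces $(c_2(X).D_i)_X=0$. Setting $M=A+D_1+D_2$ with $A$ a general fiber, one checks that $M$ is nef with $(c_2(X).M)_X=0$ and $(M^3)_X\ge (D_1|_A.D_2|_A)_A>0$, hence $M$ is \emph{big}. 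Kawamata's local rational polyhedrality of the nef cone then produces an integral nef and big $D$ with $(c_2(X).D)_X=0$, and semiampleness gives a \emph{birational} $c_2$-contraction $X\to\overline{X}$. The classification of $3$-dimensional $c_2$-contractions (Theorem~\ref{thm24}) says this is exactly $\pi_3:X_3\to\overline{X}_3$ or $\pi_7:X_7\to\overline{X}_7$; maximality and the fact that $\overline{X}_7$ admits no nontrivial fibration then pin down $f\cong\varphi_3$.

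Your outline does invoke the words ``$c_2$-contraction'', but only as the abelian fibration $f$ itself; you never produce a nef and \emph{big} class annihilated by $c_2$, which is what promotes the contraction from the trivial $1$-dimensional one to the decisive $3$-dimensional one. More seriously, even if you grant yourself isotriviality with fiber $E_\omega^2$, that does \emph{not} characterize $X_3$: the Schoen-type fiber product $V=S_1\times_{\P^1}S_2$ in Example~\ref{ex2}(1) is a Calabi--Yau threefold abelian-fibered over $\P^1$ with every smooth fiber isomorphic to $E_\omega^2$, yet $V\not\simeq X_3$ (they have different Euler numbers). So the inference ``isotrivial with fiber $E_\omega^2$ and base $\P^1$ $\Rightarrow$ crepant resolution of $E_\omega^3/\langle\omega\rangle$'' is false as stated; the singular-fiber and twisting data you flag as ``the main obstacle'' are not merely a technical nuisance but genuinely distinguish non-isomorphic models. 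The paper sidesteps all of this by never analyzing the fibers directly and instead appealing to the rigidity of \emph{birational} $c_2$-contractions.
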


It is surprizing that only the existence of a relative automorphism of positive entropy determines the target fibered Calabi-Yau threefold uniquely among all abelian fibered Calabi-Yau threefolds. The following example may be interesting if one compares with Theorem \ref{mainthm1}.

\begin{example}\label{ex2} 
\begin{enumerate}

\item Let us consider two relatively minimal rational elliptic surface $\varphi_i : S_i \to \P^1$ ($i=1$, $2$) defined by the Weierstrass equations 
$$y^2 = x^3 -(t^6-1)\,\, ,\,\, y^2 = x^3 -(t^6-a)\,\, (a \not= 0,1).$$
Then the fiber product 
$$f : V := S_1 \times_{\P^1} S_2 \to \P^1$$ 
over the base space $\P^1$ is a Calabi-Yau threefold with topological Euler number $0$ (\cite{Sc88}, \cite{GLW22}). By construction, $f$ is an abelian fibration whose smooth fibers $V_t$ are isomorphic to $E_{\omega}^2$ as $\varphi_3$ in Example \ref{ex1}. Thus $g_2 \in \Aut (V_t)$ in Example \ref{ex1} for all smooth $V_t$ might seem to be gathered together to be an element $\tilde{g} \in {\rm Bir}\, (V/\P^1)$. {\it However, this is not true!} Note that, since singular fibers $V_s$ ($s^6 =1$ and $a$) of $f$ contain no rigid rational curves, $V$ has no flops over $\P^1$ (\cite{Ka08}). Then if we would obtain $\tilde{g} \in {\rm Bir}\, (V/\P^1)$, then $\tilde{g} \in \Aut\, (V/\P^1)$ and $d_1(\tilde{g}) >1$. Thus $V \simeq X_3$ by Theorem \ref{mainthm1}. However, $V$ is of topological Euler number $0$, while $X_3$ is of topological Euler number $72$ so that they are not isomorphic.

\item $\varphi_3 : X_3 \to \P^1$ has 27 $\P^1$ of normal bundle $\sO_{\P^1}(-1)^{\oplus 2}$ in the singular fibers and we can perform Atiyah flop of these 27 $\P^1$ within the category of projective varieties and get a new abelian fibered Calabi-Yau threefold $\varphi_3' : X_3' \to \P^1$ (\cite[Proposition 2.4]{Og96}). Then $X_3'$ is birational to $X_3$ over $\P^1$, but there is no element of $\Aut (\varphi_3')$ of positive entropy. Indeed, $X_3'$ is not even homeomorphic to $X_3$ by \cite[Theorem 3.1]{LO09} so that the result follows from Theorem \ref{mainthm1}.

\item There are many explicit examples of abelian fibered Calabi-Yau threefolds $f : V \to \P^1$ with $|\Aut (V/\P^1)| = \infty$, which are not isomorphic to $\varphi_3 : X_3 \to \P^1$. See e.g. \cite{Bor91}, \cite{Sc88}, \cite{GLW22}. For such $f$, we have $d_1(h) = 1$ for every $h \in \Aut (f)$ again by Theorem \ref{mainthm1}.

\item Let $f : V \to B$ be a Lagrangian fibration of a projective hyperk\"ahler manifold $X$. Then general fibers of $f$ are abelian varieties of dimension $\dim X/2$ and $\rho(B) = 1$ (\cite{Ma99}). Then $g^*L = L$ for every $g \in \Aut (f)$. Here $L$ is the pullback of the ample generator of $N^1(B)$. We have $q_V(L) = 0$ with respect to the Beauvill-Bogomolov form $q_V$ and therefore $d_1(g) = 1$ and $\Aut (f)$ is a vertually abelian group by \cite[Proposition 2.7]{Og07}. More strongly, by Amerik and Verbitsky \cite[Proposition 3.8 and its proof]{AV23}, the translation automorphisms of fibers in ${\rm Aut}\, (f)$ form a finite index subgroup of ${\rm Aut}\, (f)$ (cf. Example \ref{ex1}). 
 
\item Let $E$ be an elliptic curve and $C$ a smooth projective curve of genus $\ge 2$. Then $V := E \times E \times C$ is of Kodaira dimension $1$ and the third projection ${\rm pr}_3 : V \to C$ is the pluri-canonical morphism of $V$, which is also an abelian fibration on $V$. The automorphism $g_2 \times {\rm id}_C$ of the same shape as in Example \ref{ex1} is an element of $\Aut ({\rm pr}_3)$ and it is of positive entropy. However, $V = E \times E \times C$ form $3g-2$ dimensional family, and $V$ is far from being unique.

\item There are infinitely many smooth projective rational surfaces $S$ with automprphism $g$ of positive entropy (\cite{Mc07}, \cite{BK09} for interesting explicit examples). Let $C$ be a smooth projective curve of genus $\ge 1$. Then the projection ${\rm pr}_2 : S \times C \to C$ is a maximal rationally connected fibration of $S \times C$ with automorphism $(g, {\rm id}_C) \in \Aut ({{\rm pr}_2})$ of positive entropy. Again, such $S \times C$ is far from being unique.
\end{enumerate}
\end{example}

The following question might be interesting (cf. Examples \ref{ex1}, \ref{ex2} (3)(4)):

\begin{question}\label{ques1} Let $f : X \to B$ be an abelian fibered Calabi-Yau threefold which is not isomorphic to $\varphi_3 : X_3 \to \P^1$ in Example \ref{ex1} as fiber spaces. Is $\Aut (f)$ a virtually abelian group?
\end{question}

We call an elliptically fibered Calabi-Yau threefold $f : X \to W$ is {\it of Type $II_0$} if $f$ is a {\it $c_2$-contraction}, that is, $(c_2(X).f^*H)_X = 0$ for an ample class $H \in N^1(W)$ (\cite{OS01}). We have a fairly concrete classification of an elliptically fibered Calabi-Yau threefold of Type $II_0$ (\cite[Theorem 3.4]{OS01}, see also Theorem \ref{thm24} in Subsection \ref{subsect23}). 

Our second main result is the following:

\begin{theorem}\label{mainthm2} Let $f : X \to B$ be an elliptically fibered Calabi-Yau threefold. Assume that there is $g \in {\rm Bir}\, (f)$ with $d_1(g) > 1$ and denote by $g_B \in {\rm Bir}\, (B)$ the birational automorphism of $B$ induced from $g$. Then:
\begin{enumerate}

\item There is an elliptically fibered Calabi-Yau threefold $f' : X' \to B'$ of Type $II_0$, which is birationally isomorphic to $f : X \to B$, such that $g_{B'} \in \Aut (B')$ with $d_1(g_{B'}) > 1$ for $g_{B'} \in {\rm Bir}\, (B')$ induced from $g_B \in {\rm Bir}\, (B)$.

\item Asuume further that $g \in \Aut (f)$ (and thus $g_B \in \Aut (B)$). Then there are a birational morphism $\tau : B \to B'$ and $g_{B'} \in \Aut (B')$ with $d_1(g_{B'}) > 1$ for $g_{B'} \in {\rm Bir}\, (B')$ induced from $g_B \in \Aut (B)$ such that 
$$\tau \circ f : X \to B \to B'$$
is an elliptically fibered Calabi-Yau threefold of Type $II_0$ with $g \in \Aut (\tau \circ f)$.  
\end{enumerate} 
In particular, in both cases, $f : X \to B$ is isotrivial in the sense that any general fibers of $f$ are isomorphic.
\end{theorem}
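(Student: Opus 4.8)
\emph{The plan.} The guiding principle is that for an elliptic (relative dimension one) fibration positive entropy can only come from the base, and that the Chern class $c_2(X)$ furnishes an invariant, pseudo-effective class on $B$ whose vanishing is precisely the Type $II_0$ condition. First I would reduce everything to the surface $B$. Writing $g_B\in{\rm Bir}(B)$ for the induced map, the product formula for dynamical degrees (Dinh--Nguyen) applied to the semiconjugacy $f\circ g=g_B\circ f$ of relative dimension one gives
\[
d_1(g)=\max\{\,d_1(g_B)\cdot d_0(g\,|\,f),\ d_0(g_B)\cdot d_1(g\,|\,f)\,\}.
\]
Here $d_0(g_B)=d_0(g\,|\,f)=1$, and the relative degree $d_1(g\,|\,f)=1$ because $g$ restricts on a general fiber to an isomorphism of elliptic curves; hence $d_1(g_B)=d_1(g)>1$, so $g_B$ is a \emph{positive-entropy} birational self-map of the rational (klt) surface $B$. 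The second key object is $\gamma:=f_*c_2(X)\in N^1(B)_\R$ (using $N_1(B)\cong N^1(B)$), which by the projection formula computes $(c_2(X)\cdot f^*H)_X=(\gamma\cdot H)_B$. By Miyaoka semipositivity for the minimal threefold $X$ one has $(c_2(X)\cdot D)\ge0$ for every nef $D$, so $\gamma$ is pseudo-effective; and $c_2(X)$ is preserved by $g$ (an automorphism, or a birational self-map which is an isomorphism in codimension one), whence $\gamma$ is $g_B$-invariant.

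Next I would pass to a model on which $g_B$ is biregular. In the setting of (2) this is free, since $g_B\in\Aut(B)$ already. In the setting of (1) I would invoke the Diller--Favre regularization of positive-entropy birational surface maps to produce a smooth projective rational surface on which $g_B$ becomes an automorphism of positive entropy, together with a relatively minimal elliptic Calabi-Yau model of $X$ over it. On such an automorphism model the structure theory of positive-entropy surface automorphisms (Cantat, McMullen) supplies nef, isotropic eigenclasses $\theta^+,\theta^-$ with $g_B^*\theta^{\pm}=\lambda^{\pm1}\theta^{\pm}$, where $\lambda=d_1(g_B)$, satisfying $\theta^+\cdot\theta^->0$, and with $\langle\theta^+,\theta^-\rangle^{\perp}$ negative definite and spanned by the classes of the finitely many $g_B$-periodic curves. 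The invariance of $c_2(X)$ then yields the scaling
\[
(\gamma\cdot\theta^+)=(c_2(X)\cdot f^*\theta^+)=(g^*c_2(X)\cdot g^*f^*\theta^+)=(c_2(X)\cdot f^*g_B^*\theta^+)=\lambda\,(\gamma\cdot\theta^+),
\]
which forces $(\gamma\cdot\theta^+)=0$, and symmetrically $(\gamma\cdot\theta^-)=0$.

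I would then extract the Type $II_0$ model by contraction. Taking the divisorial Zariski decomposition $\gamma=P+N$, the nef part $P$ is $g_B$-invariant with $P\cdot\theta^{\pm}=0$, hence lies in the negative-definite complement; as $P^2\ge0$ this forces $P=0$. Thus $\gamma=N$ is effective and supported on $g_B$-periodic curves with negative-definite intersection matrix, which can be contracted $g_B$-equivariantly by a birational morphism $\tau:B\to B'$ onto a klt rational surface carrying $g_{B'}\in\Aut(B')$ with $d_1(g_{B'})>1$. By the projection formula $(c_2(X)\cdot(\tau\circ f)^*H)=(\tau_*\gamma\cdot H)=0$, so $\tau\circ f:X\to B'$ is a $c_2$-contraction, i.e.\ of Type $II_0$; in case (2) the threefold $X$ and the automorphism $g$ are untouched, giving $g\in\Aut(\tau\circ f)$, while in case (1) one records only the base statement $g_{B'}\in\Aut(B')$ on the regularized model. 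For the final isotriviality I would note that Type $II_0$ means the discriminant class is numerically trivial; on a rational base this makes the discriminant empty in codimension one, so the $j$-invariant is constant and $f$ is isotrivial, which is in any case immediate from the structure theorem (Theorem \ref{thm24}).

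\emph{The main obstacle.} I expect the principal difficulty to lie in the equivariant birational surgery underlying (1): simultaneously regularizing the merely birational $g_B$ to an automorphism and producing a relatively minimal elliptic Calabi-Yau threefold $X'\to B'$ birational to $X\to B$, while ensuring the two operations are compatible, namely that the periodic-curve contraction on the base is matched by flops on the threefold and that the functional $c_2\cdot(-)$ is preserved throughout. Controlling this interaction between the two-dimensional dynamics and the three-dimensional minimal model program is the heart of the argument; the purely numerical vanishing $\gamma=0$ is comparatively formal once an automorphism model is in hand.
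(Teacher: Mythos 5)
There is a genuine gap, and it sits exactly where you yourself located ``the main obstacle'': the regularization step in part (1). You propose to invoke ``Diller--Favre regularization of positive-entropy birational surface maps'' to conjugate $g_B$ to an automorphism on a smooth model. No such theorem exists: Diller--Favre produce an \emph{algebraically stable} model, not an automorphism model, and a birational surface map with $d_1>1$ is in general not birationally conjugate to any automorphism (for instance, dynamical degrees of surface automorphisms are Salem numbers, while birational self-maps realize many non-Salem degrees, e.g.\ degree-$2$ Cremona maps with $d_1=2$). What makes regularization possible in this problem is not two-dimensional dynamics but the threefold geometry, and this is precisely the paper's key idea (Proposition \ref{prop41}): take Nakayama's \emph{standard model} $f':X'\to W$ of the elliptic fibration, constructed projectively by running MMP; since $X'$ is a minimal Calabi--Yau threefold, $g'$ and $(g')^{-1}$ are isomorphisms in codimension one (flops connect minimal models, \cite{Ka08}); the \emph{equidimensionality} of the standard model then forces $g_W$ to contract no divisor, so $g_W$ is an isomorphism in codimension one on a normal projective surface, hence an automorphism by Zariski's main theorem. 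Your appeal to Diller--Favre would fail, and nothing in your proposal replaces it.

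A second, related problem is your invariant class $\gamma=f_*c_2(X)$: you assert that $c_2(X)$ is preserved by a birational self-map that is an isomorphism in codimension one, but the pairing $(c_2(X).D)_X$ is \emph{not} invariant under flops in general --- it changes by correction terms supported on the flopped curves (compare Example \ref{ex2} (2) of the paper, where flopping $27$ curves in $X_3$ even changes the topology). To salvage your computation $(\gamma\cdot\theta^{+})=\lambda(\gamma\cdot\theta^{+})$ for $g\notin\Aut(X)$ you would at least have to argue that all flopped curves of $g$ are vertical, so that the corrections pair to zero with $f^*$-classes; you do not. The paper avoids $c_2$ on the base altogether: it uses the canonical bundle formula $-K_W\sim_{\Q}\Delta$ with $(W,\Delta)$ klt, Sakai's Zariski decomposition $\Delta=P+N$ whose uniqueness gives $h^*N=N$ as $\Q$-divisors for every $h\in\Aut(W)$ (Lemma \ref{lem41}), and then kills $P$ against the Perron--Frobenius eigenvector (Lemma \ref{lem42}) --- only the tautological invariance $g_W^*K_W=K_W$ is needed. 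Two further points in your contraction step need repair: the orthogonal complement $\langle\theta^+,\theta^-\rangle^{\perp}$ is negative definite but is \emph{not} in general spanned by periodic curves, and contracting a negative-definite configuration by Grauert/Artin gives a priori only an algebraic space with uncontrolled singularities; the paper secures projectivity and klt-ness of $B'$ by running the $(K_W+(1+\epsilon)N)$-MMP. Your correct ingredients --- the Dinh--Nguyen product formula giving $d_1(g_B)=d_1(g)>1$ (Proposition \ref{prop24}), the eigenvector argument annihilating the nef part, and isotriviality via the structure theorem (Theorem \ref{thm24}) --- do match the paper, but the two gaps above are essential, not cosmetic.
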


\begin{remark} The assertion (1) in Theorem \ref{mainthm2}, which is not in the first version, is entirely inspired by a very interesting question asked by the referee.
\end{remark}

There are several examples of elliptically fibered Calabi-Yau threefolds in Theorem \ref{mainthm2} as shown in Example \ref{ex22} in Subsection \ref{subsect23}. Theorem \ref{mainthm2} has again some sharp contrast to the pluri-canonical maps of threefolds of Kodaira dimension $2$ as shown in Example \ref{ex3} below, but in a completely different way from Kodaira dimension $1$ (Example \ref{ex2}):

\begin{example}\label{ex3} Let $X$ be a smooth projective threefold with Kodaira dimension $2$ and $\varphi : X \dasharrow B$ be a pluri-canonical map of $X$. Then $\varphi$ is an elliptic fibration and ${\rm Bir}\, (X)$ acts biregularly on $B$ as a finite group \cite[Theorem 14.10]{Ue75}. Then $d_1(g) = 1$ for all $g \in {\rm Bir}\, (X)$ by the product formula of the dynamical degrees (\cite[Theorem 1.1]{DN11}, see also \cite{Tr20}, \cite{Tr15} for purely algebro-geometric formulation and proof).
\end{example}

We prove Theorem \ref{mainthm1} in Section \ref{sect3} and Theorem \ref{mainthm2} in Section \ref{sect4}, using $c_2$-contraction theorem of Calabi-Yau threefolds \cite{OS01} (see also Theorem \ref{thm24}) with an idea in \cite[Section 5]{KOZ09} and general theory of elliptic threefolds due to Nakayama \cite[Appendix A]{Na02}.

\medskip

It may be interesting to study a K3 fibered Calabi-Yau threefold $f : X \to \P^1$ with relative automorphisms of positive entropy. As the first step, the following question might be interesting:

\begin{question}\label{ques2} Is any K3 fibered Calabi-Yau threefold $f : X \to \P^1$ with relative automorphisms of positive entropy isotrivial in the sense that smooth fibers of $f$ are isomorphic?
\end{question}

This is true for abelian fibered cases and elliptically fibered cases by Theorems \ref{mainthm1} and \ref{mainthm2}.

\medskip

{\bf Acknowledgements.} I would like to thank Professor J\'anos Koll\'ar for his valuable comments in a very beginning stage of this work. I would like to thank Professors Ken Abe, Isamu Iwanari and Sho Tanimoto for their invitation to Kinosaki Algebraic Geometry Symposium 2023. I would like to thank the referee for her/his careful reading with several constructive comments and a very interesting question, all of which are reflected in the revised version. This paper is an expanded version of a report of my talk there (\cite{Og23}, in Japanese). 

\section{Preliminaries}\label{sect2}

In this section, we recall some facts used in the proof of Theorems \ref{mainthm1} and \ref{mainthm2}.

\subsection{First dynamical degrees}\label{subsect21} 
\hfill

Let $V$ be a smooth projective variety and $g \in {\rm Bir}\, (V)$ be a birational automorphism of $V$ such that $g$ is isomorphic in codimension one. Then we have $(f^n)^* = (f^*)^n$ on $N^1(V)$ and $f^*$ preserves the movable cone $\overline{{\rm Mov}}\, (V) \subset N^1(V)_{\R}$. If in addition $f \in \Aut, (V)$, then $f^*$ also preserves the intersection product and the nef cone ${\rm Nef}\, (V) \subset N^1(V)_{\R}$. We frequently use the following well-known consequence of Perron-Frobenius-Birkhoff theorem (\cite{Bi67}, see also \cite[Appendix A]{NZ09}, \cite{Og14}), which claims that {\it the spectral radius of a linear invertible map $F$ of a finite dimensional $\R$-vector space $L$ preserving a strictly convex closed cone $C \subset L$ with non-empty interior, is a real eigenvalue of $F$ with real eigenvector in $C$}:

\begin{theorem}\label{thm21} Let $V$ be a smooth projective variety and $g \in {\rm Bir}\, (V)$ be a birational automorphism of $V$ such that $g$ is isomorphic in codimension one. Then the first dynamical degree $d_1(g)$ of $g$ coincides with the spectral radius of $g^*|_{N^1(V)_{\R}}$ and it is a real eigenvalue of $g^*|_{N^1(V)_{\R}}$. Moreover: 
\begin{enumerate}
\item There is $v \in \overline{{\rm Mov}}\, (V) \setminus \{0\}$ such that $g^*v = d_1(g)v$, that is, there is a real nef eigenvector $v \in \overline{{\rm Mov}}\, (V)$ of $g^*|_{N^1(V)_{\R}}$ with eigenvalue $d_1(g)$. 

\item Assume further that $g \in \Aut (V)$. Then there is $v \in {\rm Nef}\, (V) \setminus \{0\}$ such that $g^*v = d_1(g)v$, that is, there is a real nef eigenvector $v \in {\rm Nef}\, (V)$ of $g^*|_{N^1(V)_{\R}}$ with eigenvalue $d_1(g)$.
\end{enumerate} 
\end{theorem}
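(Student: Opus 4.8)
The plan is to reduce both assertions to the Perron--Frobenius--Birkhoff statement quoted just above, so the work splits into two parts: first, identifying $d_1(g)$ with the spectral radius of $F := g^*|_{N^1(V)_{\R}}$; and second, verifying the cone hypotheses of that theorem for the two cones $\overline{{\rm Mov}}\, (V)$ and ${\rm Nef}\, (V)$.

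For the first part, I would start from the definition of the first dynamical degree, $d_1(g) = \lim_{n \to \infty} \|(g^n)^*\|^{1/n}$ with respect to a fixed norm on $N^1(V)_{\R}$, as in \cite{DS05}. The crucial input, already recorded in the paragraph preceding the statement, is that since $g$ is isomorphic in codimension one the pullback is multiplicative: $(g^n)^* = (g^*)^n = F^n$ on $N^1(V)$. Hence $d_1(g) = \lim_{n} \|F^n\|^{1/n}$, which by Gelfand's spectral-radius formula equals the spectral radius of $F$. Note also that $F$ is invertible, with inverse $(g^{-1})^*$, again by multiplicativity applied to $g^{-1}$, which is likewise isomorphic in codimension one.

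For the second part, the two cases differ only in the choice of cone. In each case I need the cone to be closed, to have non-empty interior, to be strictly convex (i.e.\ to contain no line), and to be preserved by $F$. Closedness and non-empty interior are standard: $\overline{{\rm Mov}}\, (V)$ contains the big movable classes in its interior, and ${\rm Nef}\, (V)$ contains the ample cone in its interior. Strict convexity holds because both cones sit inside the pseudo-effective cone, which contains no line. Invariance of $\overline{{\rm Mov}}\, (V)$ under $F$ holds for any birational map isomorphic in codimension one and is recorded in the setup paragraph; invariance of ${\rm Nef}\, (V)$ holds when $g \in \Aut (V)$, since then $g^*$ preserves the intersection pairing and hence the nef cone, also recorded there. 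Applying the quoted theorem with $C = \overline{{\rm Mov}}\, (V)$ yields part (1), and with $C = {\rm Nef}\, (V)$ yields part (2): in each case the spectral radius of $F$, namely $d_1(g)$, is a real eigenvalue of $F$ admitting an eigenvector in the chosen cone.

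The only genuine subtlety, and the step I would treat most carefully, is the multiplicativity $(g^n)^* = (g^*)^n$ used in the first part: for a general birational map, pullback on $N^1$ fails to be functorial because of indeterminacy, and it is precisely the hypothesis that $g$ be isomorphic in codimension one (so that $g$ and all of its powers contract no divisor) that restores functoriality. Everything else is a direct check of hypotheses followed by an appeal to the Perron--Frobenius--Birkhoff theorem already stated.
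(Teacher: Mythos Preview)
Your proposal is correct and follows exactly the approach the paper indicates: the paper does not write out a proof of this theorem but presents it as a well-known consequence of the Perron--Frobenius--Birkhoff theorem (cited just before the statement), together with the multiplicativity $(g^n)^* = (g^*)^n$ and the invariance of $\overline{{\rm Mov}}\,(V)$ (resp.\ ${\rm Nef}\,(V)$) recorded in the preceding paragraph. Your write-up simply makes these implicit steps explicit.
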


We define the first dynamical degree $d_1(g)$ of an automorphism $g$ of a normal projective variety $V$ as the spectral radius of $g^*|_{N^1(V)_{\R}}$, which will be used in the proof of Theorem \ref{mainthm2}. The following comparison is well known and will be also used:

\begin{proposition}\label{prop22} Let $W$ be a normal projective variety of dimension $d$, $g \in {\rm Aut}\, (W)$, $\nu : \tilde{W} \to W$ an equivariant projective resolution with respect to $g$ and $\tilde{g} \in \Aut (\tilde{W})$ the induced automorphism of $\tilde{W}$. 
Then $d_1(\tilde{g}) = d_1(g)$. Moreover, they coincide with the spectral radii of $\tilde{g}^*|_{N^1(\tilde{W})_{\R}}$ and $g^*|_{N^1(W)_{\R}}$ with an eigenvector in ${\rm Nef}\,(\tilde{W})$ and in ${\rm Nef}\,(W)$ respectively.
\end{proposition}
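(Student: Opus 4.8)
The plan is to compare the two actions through the birational morphism $\nu$ and to reduce everything to the Perron--Frobenius--Birkhoff theorem. Since by the definition recalled just above, $d_1(g)$ and $d_1(\tilde{g})$ are the spectral radii of $g^*|_{N^1(W)_{\R}}$ and of $\tilde{g}^*|_{N^1(\tilde{W})_{\R}}$, the clause ``they coincide with the spectral radii'' is automatic; the substance of the statement is the equality $d_1(\tilde{g}) = d_1(g)$ together with the existence of the two nef eigenvectors.

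First I would record the two functorial maps attached to $\nu$: the injective pullback $\nu^* : N^1(W)_{\R} \inj N^1(\tilde{W})_{\R}$ and the pushforward $\nu_* : N^1(\tilde{W})_{\R} \surj N^1(W)_{\R}$, which satisfy $\nu_* \circ \nu^* = \id$ by the projection formula, and, by the $g$-equivariance $\nu \circ \tilde{g} = g \circ \nu$, the intertwining relations $\nu^* \circ g^* = \tilde{g}^* \circ \nu^*$ (contravariance of pullback) and $\nu_* \circ \tilde{g}^* = g^* \circ \nu_*$ (covariance of pushforward applied to $g^{-1} \circ \nu = \nu \circ \tilde{g}^{-1}$). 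The first relation shows that the subspace $\nu^* N^1(W)_{\R}$ is $\tilde{g}^*$-invariant and that $\tilde{g}^*$ restricted to it is conjugate to $g^*$; hence $d_1(g) \le d_1(\tilde{g})$.

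For the reverse inequality I would use that $\nu_* \circ \nu^* = \id$ forces the $\tilde{g}^*$-invariant direct sum decomposition $N^1(\tilde{W})_{\R} = \nu^* N^1(W)_{\R} \oplus \ker \nu_*$, both summands being $\tilde{g}^*$-invariant by the two intertwining relations. On $\nu^* N^1(W)_{\R}$ the map $\tilde{g}^*$ is conjugate to $g^*$, so it contributes spectral radius $d_1(g)$. On the complementary summand I would identify $\ker \nu_*$ with the span of the classes of the finitely many $\nu$-exceptional prime divisors $E_1, \dots, E_r$; since $\nu$ is $g$-equivariant, $\tilde{g}$ preserves the exceptional locus and therefore permutes $E_1, \dots, E_r$, so $\tilde{g}^*$ acts on $\ker \nu_*$ through a permutation of the classes $[E_1], \dots, [E_r]$ and has spectral radius $1$. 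As $g^*$ acts on the lattice $N^1(W)$ with determinant $\pm 1$ one has $d_1(g) \ge 1$, so the exceptional summand contributes no eigenvalue of larger modulus; hence the spectral radius of $\tilde{g}^*$ equals $\max(d_1(g), 1) = d_1(g)$, giving $d_1(\tilde{g}) \le d_1(g)$ and thus equality.

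Finally, for the eigenvectors: upstairs, Theorem \ref{thm21}(2) applied to $\tilde{g} \in \Aut(\tilde{W})$ provides directly a nef eigenvector $\tilde{v} \in {\rm Nef}(\tilde{W}) \setminus \{0\}$ with $\tilde{g}^* \tilde{v} = d_1(\tilde{g}) \tilde{v}$. Downstairs I would not attempt to push $\tilde{v}$ forward, since the nefness of $\nu_* \tilde{v}$ is delicate when $W$ is not $\Q$-factorial; instead I would apply the Perron--Frobenius--Birkhoff theorem directly to the invertible map $g^*$ of $N^1(W)_{\R}$, which preserves the strictly convex closed cone ${\rm Nef}(W)$ of nonempty interior, obtaining an eigenvector in ${\rm Nef}(W) \setminus \{0\}$ with eigenvalue equal to the spectral radius $d_1(g) = d_1(\tilde{g})$. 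I expect the only genuine obstacle to be the identification $\ker \nu_* = \langle [E_1], \dots, [E_r] \rangle$ and the well-definedness of $\nu_*$ on numerical classes of Cartier divisors when $W$ is merely normal; I would dispose of this either by passing to a $\Q$-factorialization of $W$ and factoring $\nu$ through it, or by appealing to the standard structure of resolutions in Nakayama's framework \cite{Na02}, all the remaining steps being formal linear algebra together with Perron--Frobenius--Birkhoff.
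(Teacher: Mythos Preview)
Your approach is genuinely different from the paper's and is worth contrasting. The paper does not attempt to decompose $N^1(\tilde{W})_{\R}$ at all; instead it fixes very ample classes $H$ on $W$ and $\tilde{H}$ on $\tilde{W}$, sets $B=\nu^{*}H$ (nef and big), sandwiches $\tilde{H}$ and $B$ between one another up to effective corrections, and uses $((\tilde{g}^{n})^{*}B.B^{d-1})_{\tilde W}=((g^{n})^{*}H.H^{d-1})_{W}$ to identify the two growth rates; the nef eigenvectors then come from \cite[Proposition~A.2]{NZ09}. Your linear-algebraic argument is more transparent about \emph{why} the exceptional locus contributes nothing (permutation action, spectral radius $1$), while the paper's intersection-theoretic sandwich is more robust and never needs to name the exceptional divisors. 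For the nef eigenvectors both of you ultimately invoke Perron--Frobenius--Birkhoff, so that part coincides.

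There is, however, a real gap in your reverse inequality that you have flagged but not closed. When $W$ is not $\Q$-factorial the direct sum $N^1(\tilde{W})_{\R}=\nu^{*}N^1(W)_{\R}\oplus\langle [E_{1}],\dots,[E_{r}]\rangle$ can fail: for $W$ the projective cone over a smooth quadric surface one has $\rho(W)=1$, a single exceptional prime divisor $E$ on the blow-up $\tilde{W}$, yet $\rho(\tilde{W})=3$, so $\nu^{*}N^1(W)_{\R}+\R[E]$ is a proper $\tilde{g}^{*}$-invariant subspace and your argument says nothing about the eigenvalue on the one-dimensional quotient. Your proposed remedy of passing to a $\Q$-factorialization $W'\to W$ does not obviously work: such a small modification need not be $g$-equivariant (already for an ordinary double point the two small resolutions can be interchanged by an automorphism), and even if it were, comparing $d_{1}$ on $W'$ with $d_{1}$ on $W$ is the very problem you are trying to solve. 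The appeal to \cite{Na02} is too vague to carry weight here. Since every base surface actually occurring in this paper is klt, hence $\Q$-factorial by Proposition~\ref{prop23}, your argument would suffice for the applications, but as a proof of Proposition~\ref{prop22} as stated (for arbitrary normal $W$) it is incomplete; the paper's sandwich bypasses the issue entirely because pullback of Cartier classes and intersection numbers behave well regardless of $\Q$-factoriality.
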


\begin{proof} Let $H$ be a very ample Cartier class on $W$ and $B = \nu^{*}H$, which is a nef and big class on $\tilde{W}$. Let $\tilde{H}$ be a very ample divisor on $\tilde{W}$. Then there are positive integers $m$ and $m'$ and effective classes $E$ and $E'$ on $\tilde{W}$ such that 
$$\tilde{H} + E = mB\,\, ,\,\, B+ E' = m'\tilde{H}\,\, {\rm in}\,\, N^1(\tilde{W}).$$
Combining this with $((\tilde{g}^n)^*B.B^{d-1})_{\tilde{W}} = ((g^n)^*H.H^{d-1})_{W}$, it readily follows that $d_1(\tilde{g}) = d_1(g)$. The second statement follows from \cite[Proposition A.2]{NZ09}, as $(g^n)^* = (g^{*})^n$ for $g \in {\rm Aut}\, (W)$ and therefore we may apply it for the action of $g^*$ on ${\rm Nef}\, (W) \subset N^1(W)_{\R}$ with ample class $H$ which is in the interior of ${\rm Nef}\, (W)$, and similarly for $\tilde{g}$. 
\end{proof} 

The following proposition is also well known for the experts. Since this proposition is crucial in our proof of Theorem \ref{mainthm2}, following a request by the referee, we also recall a general statement and its proof based on \cite{DS05} and \cite{DN11} (See also \cite{Tr15}, \cite{Tr20} for purely algebro-geometric statements and proofs for the facts we will use in the proof below).  

\begin{proposition}\label{prop24} Let $V$ and $W$ be smooth projective 
varieties such that $\dim V - \dim W =1$ and $f : V \dasharrow W$ be a dominant rational map. Let $g \in {\rm Bir}\, (V)$ and $g_W  \in {\rm Bir}\, (W)$ such that $f \circ g = g_W \circ f$. Then $d_1(g) = d_1(g_W)$.
\end{proposition}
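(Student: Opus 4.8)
The plan is to invoke the product formula for dynamical degrees under a semi-conjugacy, due to Dinh and Nguyen \cite{DN11} (with the cohomological foundations of \cite{DS05}), applied to the dominant map $f : V \dashrightarrow W$, which semi-conjugates $g$ to $g_W$ via $f \circ g = g_W \circ f$. Set $n := \dim V$ and $m := \dim W = n-1$, so the relative dimension of $f$ is $1$. For each $k$ with $0 \le k \le n$, the formula expresses $d_k(g)$ through the dynamical degrees $d_i(g_W)$ of the base map and the relative dynamical degrees $d_j(g\,|\,f)$ of $g$ along the fibers of $f$:
$$d_k(g) = \max_{j} d_{k-j}(g_W)\, d_{j}(g\,|\,f),$$
where $j$ runs over the indices with $0 \le j \le 1$ and $0 \le k-j \le m$.

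First I would apply this with $k = 1$. The admissible values are $j = 0$ and $j = 1$, and using the normalizations $d_0(g\,|\,f) = 1$ and $d_0(g_W) = 1$ this gives
$$d_1(g) = \max\bigl(d_1(g_W),\, d_1(g\,|\,f)\bigr).$$
Since $d_1(g_W) \ge 1$ always, everything reduces to showing that the relative first dynamical degree $d_1(g\,|\,f)$ equals $1$, for then the maximum is $d_1(g_W)$.

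To see $d_1(g\,|\,f) = 1$ I would use that the relative dimension is exactly $1$, so $d_1(g\,|\,f)$ is the top relative dynamical degree, which equals the relative topological degree of $g$ with respect to $f$. Conceptually this is $1$ because $g$ is birational on $V$ and $g_W$ is birational on $W$, so $g$ carries a general fiber $f^{-1}(w)$ birationally onto $f^{-1}(g_W(w))$ and thus restricts to a degree-$1$ map on general fibers. To package this without discussing restrictions to fibers, I would instead apply the same product formula with $k = n$: the constraint $0 \le n-j \le m = n-1$ forces $j = 1$, giving $d_n(g) = d_{n-1}(g_W)\, d_1(g\,|\,f) = d_m(g_W)\, d_1(g\,|\,f)$. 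As $g$ and $g_W$ are birational, their top dynamical degrees are the topological degrees $d_n(g) = 1$ and $d_m(g_W) = 1$, whence $d_1(g\,|\,f) = 1$. Combining the two displays yields $d_1(g) = d_1(g_W)$.

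The one point that genuinely needs care is the applicability of the Dinh--Nguyen formula in this setting: it presupposes both the construction of the relative dynamical degrees for the merely rational self-map $g$ semi-conjugate to $g_W$ over $f$, and the birational invariance of all these quantities, so that the degrees are well defined for $g \in {\rm Bir}\,(V)$ and $g_W \in {\rm Bir}\,(W)$ rather than for honest morphisms. Granting the cited results (or the algebro-geometric formulations in \cite{Tr15}, \cite{Tr20}), the remaining steps are the two elementary degree computations above.
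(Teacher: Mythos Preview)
Your proof is correct and uses the same central tool as the paper --- the Dinh--Nguyen product formula \cite{DN11} --- together with the same reduction $d_1(g)=\max(d_1(g_W),d_1(g|f))$ followed by the verification that $d_1(g|f)=1$. The difference lies in how that last equality is established. The paper resolves the indeterminacy of $f$ so that $f$ becomes a surjective morphism, and then computes $d_1(g|f)$ directly from its definition: it shows that the intersection number $((g^n)^*H.(f^*H_W)^{d-1})_V$ equals $N\cdot(H.F_n)_V$ for a general fiber $F_n$, and that this is a constant independent of $n$, whence the $n$-th root tends to $1$. Your argument avoids this computation by applying the product formula a second time at $k=n$, which forces $j=1$ and yields $d_n(g)=d_m(g_W)\,d_1(g|f)$; since top dynamical degrees of birational self-maps coincide with topological degrees and hence equal $1$, you read off $d_1(g|f)=1$ immediately. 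Your route is shorter and conceptually clean, at the cost of importing one more standard fact (that $d_{\dim}(\cdot)=1$ for birational maps); the paper's route is more self-contained and makes the geometry of the fibers explicit. Both are valid, and you correctly flag that the only genuine care point is the applicability of the product formula to rational $f$, which the paper handles by the reduction step and you defer to \cite{DN11}, \cite{Tr15}, \cite{Tr20}.
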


\begin{proof} Let $d = \dim V$ and $e = \dim W$. Then $d-e=1$. Let $p_n : V_n \rightarrow V$ be a resolution of indeterminacy of $g^n$ and $q_n : V_n \to V$ be the induced morphism such that $g^n \circ p_n = q_n$. We define 
$$((g^n)^*D.E^{d-1})_{V} := (q_n^*D.(p_n^*E)^{d-1})_{V_n} = (D.(q_n)_*((p_n^*E)^{d-1}))_{V},$$
where $D$ and $E$ are line bundles on $V$. Then by \cite{DS05}, we have by definition
$$d_0(g) = \lim_{n \to \infty} (H^{d})_V^{\frac{1}{n}} =1\,\, ,\,\, d_1(g) = \lim_{n \to \infty} ((g^n)^*H.H^{d-1})_V^{\frac{1}{n}},$$
where $H$ is a very ample line bundle on $V$. 
In \cite{DS05}, Dinh and Sibony show that the limit $d_1(g)$ exists, hence $d_1(g) \ge 1$, and $d_1(g)$ does not depend on the choices of $H$ and $p_n : V_n \to V$. Moreover, $d_1(g)$ is a birational invariant in the sense that if $\nu : V' \dasharrow V$ is a birational map from a smooth projective variety $V'$ to $V$, then $d_1(g') = d_1(g)$ for $g' := \nu^{-1} \circ g \circ \nu \in {\rm Bir}\, (V')$ (\cite[Corollaire 7]{DS05}). $d_1(g_W)$ ($k=0$ and $1$) is defined in the same way as $d_k(g)$. 

Let $\nu : V' \to V$ be a resolution of indeterminacy of $f$. Set $f':= f \circ \nu$ and $g' = \nu^{-1} \circ g \circ \nu$. Then $f' : V' \to W$ is a surjective morphism and $g' \in {\rm Bir}\, (V')$ satisfies $f' \circ g' = g_W \circ f'$ and $d_1(g') = d_1(g)$ as remarked above. 

{\it So, in what follows, we may and will assume that $f$ is a surjective morphism} (by replacing $f : V \dasharrow W$ and $g$ by $f' : V' \to W$ and $g'$). 

Let $H_W$ be a very ample line bundle on $W$. Then by \cite{DN11} in the special case where $\dim X - \dim W = 1$, we have the relative $k$-th dynamical degree ($k=0$ and $k=1$) for $f : V \to W$: 
$$d_0(g|f) := \lim_{n \to \infty} (H.(f^*H_W)^{d-1})_{V}^{\frac{1}{n}} = 1\,\, ,\,\, d_1(g|f) := \lim_{n \to \infty} 
((g^n)^*H.(f^*H_W)^{d-1})_{V}^{\frac{1}{n}}.$$
Then by the main result of \cite{DN11} applied for $f : V \to W$ with $d-e =1$, we have
$$d_1(f) = {\rm Max}\, (d_0(g_W)d_1(g|f), d_1(g_W)d_0(g|f)).$$
Here $d_0(g_W) = 1$ and $d_0(g|f) =1$. We are going to show that $d_1(g|f) = 1$. Note that 
$$((g^n)^*H.(f^*H_W)^{d-1})_{V} = (q_n^*H.(p_n^*f^*H_W)^{d-1})_{V_n} = (H.(q_n)_{*}((f \circ p_n)^*H_W)^{d-1}))_V.$$
Here, as an element of $N_1(V_n)$ (which is the abelian group consisting of the numerical equivalence classes of one cycles on $V_n$), the one cycle $((f \circ p_n)^*H_W)^{d-1}$ is in the same class as $N\tilde{F}_n$, that is, 
$$((f \circ p_n)^*H_W)^{d-1} = N\tilde{F}_n$$ 
in $N_1(V_n)$, where $N := (H_W)_W^{d-1}$ and $\tilde{F}_n$ is a {\it general} fiber of $f \circ q_n$. 

Here the precise definition of a general fiber $\tilde{F}_n$ of $f \circ p_n$ is as follows. We choose Zariski open dense subsets $U_n \subset W$ and $U_n' \subset W$ such that $g_W^n|_{U_n} : U_n \to U_n'$ is an isomorphism, $f \circ p_n$ (resp. $f \circ q_n$) is smooth over $U_n$ (resp. $U_n'$) and $f$ is smooth over $U_n'$. Such $U_n$ and $U_n'$ exist, as we may first choose $U_n$ and $U_n'$ so that $g_W^n|_{U_n} : U_n \to U_n'$ is an isomorphism. Then we may just shrink $U_n$ suitably to get desired $U_n$ and $U_n' = g_W^n(U_n)$ by the generic smoothness for $g_W^n \circ f \circ p_n = f \circ q_n$ and $f$. Then 
$$\tilde{F}_n := (f \circ p_n)^{-1}(P_n) = (f \circ q_n)^{-1}(g_W^n(P_n)) = q_n^{-1} \circ f^{-1} (g_W^n(P_n))$$
for $P_n \in U_n$. Note that the class $\tilde{F}_n \in N_1(V_n)$ is independent of the choice of the point $P_n$ in $U_n$ as $f \circ p_n$ is smooth (hence flat) over $U_n$. Since $q_n$ is a birational morphism and $f$ is smooth over $U_n$, it follows that 
$$(q_n)_*(((f \circ p_n)^*H_W)^{d-1}) = (q_n)_*(N\tilde{F}_n) = (q_n)_{*}(N q_{n}^{-1}f^{-1}(g_W^n(P_n))) = Nf^{-1}(g_W^n(P_n)) = N F_n$$ 
in $N_1(V)$ for $F_n := f^{-1}(g_W^n(P_n))$. Note that, for each $n$, the number $(H.F_n)_V$ does not depend on the choice of $P_n \in U_n$ as $f$ is smooth (hence flat) over $U_n' = g_W^n(U_n)$. On the other hand, for any positive integer $n$, since the set $U_n \cap U_1$ is Zariski dense in $W$, we may choose $P_n = P_1 \in U_n \cap U_1$. For this choice, we have $F_n = F_1$ and thus $(H.F_n)_{V} = (H.F_1)_{V}$. Hence $(H.F_n)_{V}$ does not depend on the choices of $P_n \in U_n$ and $n$. We denote this number by $M$. Then
$$d_1(g|f) = \lim_{n \to \infty} (N(H.F_n)_{V})^{\frac{1}{n}} = \lim_{n \to \infty} (N.M)^{\frac{1}{n}} = 1,$$
as $N.M > 0$ is constant with respect to $n$.

Hence, by $d_1(g_W) \ge 1$, we obtain 
$$d_1(f) = {\rm Max}\, (d_0(g_W)d_1(g|f), d_1(g_W)d_0(g|f)) = {\rm Max}\, (1, d_1(g_W)) = d_1(g_W),$$
as claimed.
\end{proof}

\subsection{The second Chern class of a Calabi-Yau threefold.}\label{subsect22}
\hfill

Let $X$ be a Calabi-Yau threefold. We regard the second Chern class $c_2(X)$ as a linear form on $N^1(X)_{\R} := N^1(X) \otimes_{\Z} {\R}$ via intersection form $(c_2(X), *)_X$. The hyperplane $(c_2(X), *)_X = 0$ in $N^1(X)_{\R}$ is a rational hyperplane with respect to the integral structure $N^1(X)$. By a famous theorem due to Miyaoka-Yau (\cite{Mi87}), together with the fact that $X$ is not an \'etale quotient of an abelian threefold by $\pi_1(X^{{\rm an}}) = \{1\}$, we have

\begin{theorem}\label{thm23} Let $X$ be a Calabi-Yau threefold. Then, the linear form 
$$(c_2(X). *)_X : N^1(X)_{\R} \to \R\,\, ;\,\, x \mapsto (c_2(X).x)_X$$
is strictly positive on the ample cone ${\rm Amp}\, (X)$ and non-negative on the nef cone ${\rm Nef}\, (X)$, which is the closure of ${\rm Amp}\, (X)$ in $N^1(X)_{\R}$. 
\end{theorem}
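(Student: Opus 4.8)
The plan is to isolate the substantive assertion, namely strict positivity on the open ample cone ${\rm Amp}\,(X)$, and then to deduce non-negativity on ${\rm Nef}\,(X)$ formally: since ${\rm Nef}\,(X)$ is the closure of ${\rm Amp}\,(X)$ in $N^1(X)_{\R}$ and the linear form $x \mapsto (c_2(X).x)_X$ is continuous, a strict inequality on the dense open cone passes to a weak inequality on the closure. (Equivalently, the non-negativity on ${\rm Nef}\,(X)$ is provided directly by Miyaoka's generic semipositivity theorem \cite{Mi87}: because $\sO_X(K_X)\simeq\sO_X$, the class $K_X$ is nef, so $X$ is a smooth minimal threefold and $c_2(X)$ is pseudo-effective, i.e. $(c_2(X).D)_X\ge 0$ for every nef $D$.) Note that strictness cannot hold on all of ${\rm Nef}\,(X)$: the pullback of an ample class from the base of an abelian fibration is a boundary nef class on which the form vanishes, which is exactly why the statement only claims non-negativity there.

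For the strict positivity on ${\rm Amp}\,(X)$ I would argue by contradiction, assuming $(c_2(X).H)_X = 0$ for some ample class $H$. By Yau's solution of the Calabi conjecture, the Kähler class $H$ contains a Ricci-flat Kähler metric $\omega$. I then compute $(c_2(X).H)_X$ by Chern--Weil theory with respect to $\omega$: because $\omega$ is Kähler--Einstein with vanishing Ricci curvature, the $c_1(X)^2$ contribution drops out and one is left with an expression of the shape $\frac{1}{8\pi^2}\int_X |R(\omega)|^2\,\omega \ge 0$, where $R(\omega)$ is the full curvature tensor. Hence the vanishing $(c_2(X).H)_X = 0$ forces $R(\omega)\equiv 0$, so $\omega$ is a flat metric.

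To conclude, a compact Kähler manifold carrying a flat Kähler metric is, by Bieberbach's theorem, a finite étale quotient of a complex torus; being a projective Calabi--Yau threefold, $X$ would then be a finite étale quotient of an abelian threefold. This contradicts the hypothesis $\pi_1(X^{{\rm an}}) = \{1\}$, since the universal cover of such a quotient is $\C^3$ and its fundamental group contains the rank-$6$ translation lattice. Therefore $(c_2(X).H)_X > 0$ for every ample $H$, and the non-negativity on ${\rm Nef}\,(X)$ follows as above. The main obstacle is precisely this equality case: establishing that the vanishing of $(c_2(X).H)_X$ forces flatness of $\omega$ rests on the deep analytic input (Yau's existence theorem together with the rigidity in the Miyaoka--Yau inequality), rather than on the elementary inequality alone; everything else is formal once that step is in hand.
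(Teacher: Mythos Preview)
Your argument is correct and matches the paper's own justification. The paper does not spell out a proof of this theorem; it simply records it as a consequence of ``a famous theorem due to Miyaoka--Yau \cite{Mi87}, together with the fact that $X$ is not an \'etale quotient of an abelian threefold by $\pi_1(X^{\rm an})=\{1\}$'', which is precisely the skeleton you have fleshed out: Miyaoka's pseudo-effectivity of $c_2$ (or the continuity argument) for the weak inequality on ${\rm Nef}\,(X)$, and the equality case of the Yau/Miyaoka--Yau inequality via a Ricci-flat K\"ahler metric, Chern--Weil, and Bieberbach to force a contradiction with simple connectedness for the strict inequality on ${\rm Amp}\,(X)$.
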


The following proposition is due Wilson \cite{Wi97}. This proposition indicates some crucial roles of $c_2(X)$ in the study of $\Aut(X)$ (See also \cite{OP98} for other roles of $c_2(X)$): 

\begin{proposition}\label{prop23} Let $X$ be a Calabi-Yau threefold such that $\Aut (X)$ is an infinite group. Then there is a non-zero real nef class $x \in \partial {\rm Nef}\, (X) \setminus \{0\}$ such that $(c_2(X).x)_X = 0$.
\end{proposition}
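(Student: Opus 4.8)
The plan is to argue by contradiction: I would suppose that $(c_2(X).x)_X > 0$ for every nonzero nef class $x$, and derive that $\Aut(X)$ is finite. By Theorem \ref{thm23} the linear form $(c_2(X).*)_X$ is already known to be non-negative on ${\rm Nef}\,(X)$ and strictly positive on the interior ${\rm Amp}\,(X)$, so the negation of the desired conclusion is precisely this hypothesis that the form is strictly positive on all of ${\rm Nef}\,(X) \setminus \{0\}$. Under this hypothesis the affine slice
$$K := \{x \in {\rm Nef}\,(X) \mid (c_2(X).x)_X = 1\}$$
is a compact convex set: it is closed, and if it were unbounded one could extract from a sequence $x_n \in K$ with $|x_n| \to \infty$ a limit direction $y = \lim x_n/|x_n| \in {\rm Nef}\,(X) \setminus \{0\}$ with $(c_2(X).y)_X = 0$, contradicting the hypothesis.

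First I would record that $\Aut(X)$ preserves both the cone ${\rm Nef}\,(X)$ and the class $c_2(X)$, the latter because the second Chern class is invariant under automorphisms, i.e. $g^*c_2(X) = c_2(X)$. Hence the image $\Gamma$ of the natural representation $\Aut(X) \to \mathrm{GL}(N^1(X)_{\Z})$ maps the compact set $K$ to itself. Since ${\rm Nef}\,(X)$ spans $N^1(X)_{\R}$, the slice $K$ contains a basis $e_1, \dots, e_\rho$ of $N^1(X)_{\R}$; as $g(e_i) \in K$ for all $g \in \Gamma$ and $K$ is bounded, the operators $g \in \Gamma$ have uniformly bounded norm. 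Integral matrices of bounded norm are finite in number, so $\Gamma$ is a finite group.

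Next I would control the kernel of the representation. An automorphism acting trivially on $N^1(X)$ fixes an ample class, hence preserves the projective embedding attached to a suitable multiple of it; such automorphisms form a linear algebraic group whose Lie algebra is $H^0(X, T_X)$. For a Calabi-Yau threefold $\sO_X(K_X) \simeq \sO_X$ gives $T_X \simeq \Omega^2_X$, whence $H^0(X, T_X) \simeq H^0(X, \Omega^2_X) = 0$; thus the kernel is a finite group. Combining the two finiteness statements through the exact sequence $1 \to \ker \to \Aut(X) \to \Gamma \to 1$ forces $\Aut(X)$ to be finite, contradicting the hypothesis. Therefore there exists a nonzero nef class $x$ with $(c_2(X).x)_X = 0$, and since $(c_2(X).*)_X$ is strictly positive on ${\rm Amp}\,(X)$ by Theorem \ref{thm23}, this $x$ must lie on the boundary $\partial {\rm Nef}\,(X) \setminus \{0\}$, as desired.

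I would expect the main obstacle to be the organization of the finiteness argument rather than any single computation: the crucial insight is that strict positivity of $c_2(X)$ on the whole nef cone produces a compact linear section that an infinite discrete subgroup of $\mathrm{GL}(N^1(X)_{\Z})$ cannot preserve, while the complementary input — finiteness of the subgroup of automorphisms acting trivially on $N^1(X)$ — rests on the vanishing of holomorphic vector fields special to Calabi-Yau threefolds.
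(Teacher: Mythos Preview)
The paper does not give its own proof of this proposition: it simply attributes the result to Wilson \cite{Wi97} and remarks that it will not be used in the main theorems. Your argument is correct and is the standard one behind Wilson's result: if $(c_2(X).\,*\,)_X$ were strictly positive on all of ${\rm Nef}\,(X)\setminus\{0\}$, the slice $K=\{(c_2(X).x)_X=1\}\cap{\rm Nef}\,(X)$ is compact, the image of $\Aut(X)$ in ${\rm GL}(N^1(X)_{\Z})$ preserves $K$ and is therefore a bounded set of integral matrices, hence finite, while the kernel is finite because $H^0(X,T_X)\cong H^0(X,\Omega_X^2)=0$. The only point you might make explicit is why $H^0(X,\Omega_X^2)=0$: this uses the simple-connectedness in the paper's definition of a Calabi-Yau threefold, via $h^{2,0}=h^{0,2}=h^1(\sO_X)=0$.
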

Though we will not use this proposition in our proof of main theorems, this proposition indicates some reason why $c_2$-contractions appear in our study.

\subsection{$c_2$-contraction on a Calabi-Yau threefold.}\label{subsect23}
\hfill

Let $f : X \to W$ be a fibration on a Calabi-Yau threefold $X$. 
We call $f : X \to W$ a {\it $c_2$-contraction} if the linear form $(c_2(X).*)_X$ is identically zero on $f^*N^1(W)_{\R}$. This condition is equivalent to the condition that $(c_2(X).f^*H)_X = 0$ for an ample divisor $H$ on $W$ by Theorem \ref{thm23}. We call a $c_2$-contraction $f_0 : X \to W_0$ {\it maximal} if for any $c_2$-contraction $f : X \to W$, there is $\nu_0 : W_0 \to W$ such that $f = \nu_0 \circ f_0$. For any $X$, we have a maximal $c_2$-contraction $f_0 : X \to W_0$, possibly $\dim W_0 = 0$, and a maximal $c_2$-contraction is unique in the sense that if $f_0' : X \to W_0'$ is also a maximal $c_2$-contraction, then there is an isomorphism $\tau_0 : W_0 \to W_0'$ such that $f_0' = \tau_0 \circ f_0$ (\cite[Lemma-Definition 4.1]{OS01}). 

The next proposition will be used in the proof of Theorem \ref{mainthm2}.

\begin{proposition}\label{prop23} Let $f : X \to B$ be an elliptically fibered Calabi-Yau threefold. Then there is an effective $\Q$-divisor $\Delta$ on $B$ such that $(B, \Delta)$ is a klt pair such that $12(K_B + \Delta)$ is linearly equivalent to $0$. In particular $B$ has at most quotient singularities and $\Q$-factorial. Moreover, $f$ of Type $II_0$ if and only if $\Delta = 0$ as a Weil divisor if and only if $K_W$ is $\Q$-linearly equivalent to $0$ if and only if $K_W$ is numerically equivalent to $0$.
\end{proposition}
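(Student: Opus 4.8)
The plan is to deduce every assertion from the canonical bundle (Kodaira-type) formula for the elliptic fibration $f$, together with the interpretation of the linear form $(c_2(X).\,\cdot\,)_X$ in terms of the degenerate fibres of $f$.

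First I would apply the canonical bundle formula for elliptic fibrations in the refined form due to Nakayama \cite[Appendix A]{Na02}. After replacing $X$ by a relatively minimal model over $B$, the formula produces a $\Q$-linear equivalence
$$K_X \sim_{\Q} f^*(K_B + \Delta), \qquad \Delta = L_{X/B} + \sum_P c_P\,P,$$
where $L_{X/B}$ is the nef moduli part associated with the $j$-invariant and the sum runs over the discriminant divisor, the coefficient $c_P \in [0,1)\cap \tfrac{1}{12}\Z$ being read off from the Kodaira type of the generic fibre over $P$. Both summands are effective, so $\Delta \ge 0$; and since $\sO_X(K_X)\simeq \sO_X$, i.e. $K_X \sim 0$, we obtain $f^*(K_B+\Delta)\sim_{\Q}0$. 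Because $f$ has connected fibres $f^*$ is injective on Picard groups, and the sharpened divisibility built into Nakayama's formula (the denominators of the $c_P$ and of $L_{X/B}$ divide $12$) then yields the integral relation $12(K_B+\Delta)\sim 0$.

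That $(B,\Delta)$ is klt is again part of the canonical bundle formula package: the discriminant coefficients $c_P$ lie strictly below $1$ and the base of the formula is log terminal. Since $\Delta \ge 0$, klt-ness of $(B,\Delta)$ forces $(B,0)$ to be klt; because $\dim B = 2$, a klt surface singularity is a quotient singularity, and quotient singularities are $\Q$-factorial. This gives the two ``in particular'' statements. For the final equivalences, note that the three conditions $\Delta=0$, $K_B\sim_{\Q}0$, and $K_B\equiv 0$ are already equivalent by the relation $12(K_B+\Delta)\sim 0$ with $\Delta\ge 0$: indeed $\Delta=0$ gives $12K_B\sim 0$, hence $K_B\sim_{\Q}0$ and $K_B\equiv 0$; conversely $K_B\equiv 0$ forces $\Delta\equiv -K_B\equiv 0$, and an effective divisor that is numerically trivial against the ample class $H$ must vanish, so $\Delta=0$. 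It therefore remains only to identify Type $II_0$ with, say, $\Delta=0$, and here I would invoke the $c_2$-contraction theory of \cite{OS01}: the number $(c_2(X).f^*H)_X$ measures the degeneracy of $f$ over a general curve $C\in|mH|$ on $B$ — concretely it is encoded by the Euler numbers of the singular fibres of the elliptic surface $f^{-1}(C)\to C$, equivalently by the discriminant and moduli data — so that its vanishing is exactly the absence of such degeneracy, i.e. $\Delta=0$; together with $H$ ample and $\Delta\ge 0$ this closes the chain.

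I expect this last identification to be the main obstacle. Matching the vanishing of the nonnegative number $(c_2(X).f^*H)_X$ with the vanishing of the entire Weil divisor $\Delta$ requires a careful accounting showing that the discriminant part $\sum_P c_P P$ and the moduli part $L_{X/B}$ drop out simultaneously; the delicate cases are the overlap of the $j$-pole locus with the discriminant (the $I_n$ fibres) and, since we allow fibrations without a section, any multiple-fibre contributions, which do not register in a naive Euler-number count. Rather than re-derive this by hand, I would extract the precise equivalence from the structure theorem for Type $II_0$ Calabi--Yau threefolds in \cite[Theorem 3.4]{OS01} (quoted as Theorem \ref{thm24}), which is tailored exactly to this purpose.
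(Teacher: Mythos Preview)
Your approach coincides with the paper's: the proof there is a bare citation to \cite[Lemma 3.5]{Og93} (with log abundance or \cite{Go13} offered as an alternative route for the last numerical-to-$\Q$-linear step), and that lemma is precisely the canonical bundle formula computation you sketch. Your direct argument that $K_B\equiv 0$ forces $\Delta=0$ via the effectivity of $\Delta$ is the route implicit in \cite{Og93}, so you in fact recover the sharper of the two options the paper lists.

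The one misstep is the reference you propose for the equivalence ``Type $II_0 \Leftrightarrow \Delta=0$''. Theorem \ref{thm24} (= \cite[Theorem 3.4]{OS01}) is the \emph{structural classification} of Type $II_0$ fibrations: it takes the $c_2$-vanishing hypothesis as input and outputs a description of $X$ as (a flop of) $G\text{-Hilb}(S\times E)$. Invoking it for the equivalence is at best circular and at worst yields only one implication. The actual identification of $(c_2(X).f^*H)_X=0$ with $\Delta=0$ is carried out in \cite[Lemma 3.5]{Og93} and its proof, which is what the paper cites. Your Euler-number heuristic on the elliptic surface $f^{-1}(C)\to C$ for general $C\in|mH|$ is indeed the mechanism used there, and you correctly flag that the moduli part and the multiple-fibre contributions are the points requiring care; \cite{Og93} is where that bookkeeping is done, not \cite{OS01}.
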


\begin{proof} See \cite[Lemma 3.5]{Og93} and its proof. The last equivalence follows from the proof of \cite[Lemma 3.5]{Og93}. It also follows from the log abundance theorem for surfaces with klt singularities or a general result due to Gongyo \cite[Theorem 1.2]{Go13}.
\end{proof} 

In order to state $c_2$-contraction theorem (Theorem \ref{thm24}), it is convenient to use the notion of $G$-Hilbert scheme (\cite{IN96}) for 
some special case:

\begin{definition}\label{def21} Let $V$ be a smooth projective threefold such that $H^0(\Omega_V^3) = \C \omega_V$ and the $3$-form $\omega_V$ is nowhere vanishing, and $G$ a finite subgroup of $\Aut (V)$ such that $G$ acts on $H^0(\Omega_V^3)$ as identity. $G$-Hilbert scheme $G-{{\rm Hilb}}\,(V)$ of $V$ is a closed subscheme of ${\rm Hilb}^{|G|}\,(V)$ consisting of $0$-dimensional $G$-invariant closed subscheme $Z$ of $V$ such that the induced representation of $G$ on $H^0(\sO_Z)$ is isomorphic to the regular representation of $G$. We have a natural morphism $\pi : G-{{\rm Hilb}}\,(V) \to V/G$ induced from the Hilbert-Chow morphism ${\rm Hilb}^{|G|}\,(V) \to {\rm Sym}^{|G|}(V) = V/G$. The morphism $\pi$ is a projective crepant resolution of $V/G$ by Bridgeland, King and Reid \cite{BKR01}.
\end{definition}

The following example and theorem are obtained by \cite[Theorem 3.4]{OS01} together with the fact due to \cite{BKR01} in Definition \ref{def21}:

\begin{example}\label{ex21}
\begin{enumerate}
\item Let $\overline{S}$ be a normal projective K3 surface or an abelian surface, $\nu : S \to \overline{S}$ the minimal resolution (possibly identity), $E$ an elliptic curve and $G$ a finite group with faithful actions on both $\overline{S}$ and $E$ such that the induced diagonal action 
$$G \subset \Aut (S) \times \Aut (E) \subset \Aut (S \times E)$$
acts as identity on $H^0(\Omega_{S \times E}^3)$ and the quotient surface $W := \overline{S}/G$ is a rational surface with numerically trivial canonical class. Then $G-{{\rm Hilb}}\,(S \times E)$ is a Calabi-Yau threefold provided that it is simply-connected and the morphism  
$$\varphi : {G}-{{\rm Hilb}}\,(S \times E) \xrightarrow{\pi} (S \times E)/G \xrightarrow{\overline{{\rm pr}_1}} S/G  \xrightarrow{\overline{\nu}} W = \overline{S}/G$$ 
is then an elliptically fibered Calabi-Yau threefold of Type $II_0$. Conversely, for any elliptically fibered Calabi-Yau threefold of Type $II_0$ is obtained by a finite composition of flops of $\varphi : {G}-{{\rm Hilb}}\,(S \times E) \to W$ over $W$ for some $\overline{S}$, $E$, $G$ above. 

\item Let $A_3 := E_{\omega}^3$ and $\mu_3 = \langle \omega \cdot {\rm id}_{A_3} \rangle$, Then $X_3 := \mu_3-{{\rm Hilb}}\, (A_3)$ is a Calabi-Yau threefold and the natural morphism
$$\pi_3 : X_3 = \mu_3-{{\rm Hilb}}\, (A_3) \to \overline{X}_3 := A_3/\mu_3$$
is a $c_2$-contraction on $X_3$.

\item Let $\zeta_7 := {\rm exp}(2\pi\sqrt{-1}/7)$, $A_7$ the Albanese variety of the Klein quartic curve
$$C := \{[x:y:z]\,|\, xy^3+yz^3+zx^3= 0\} \subset \P^2,$$
and $g_7 \in {\rm Aut}\, (A_7)$ the automorphism of $A_7$ induced from $g \in \Aut\, (C)$ defined by 
$$g([x : y : z]) = [\zeta_7 x: \zeta_7^2 y :\zeta_7^4 z].$$ 
Then $\mu_7 := \langle g_7 \rangle$ is a cyclic group of order $7$ and acts on $H^0(\Omega_{A_7}^3)$ as identity. Then $X_7 := \mu_7-{{\rm Hilb}}\, (A_7)$ is a Calabi-Yau threefold and the natural morphism
$$\pi_7 : X_7 := \mu_7-{{\rm Hilb}}\, (A_7) \to \overline{X}_7 := A_7/\mu_7$$ 
is a $c_2$-contraction of a Calabi-Yau threefold $X_7$. 
\end{enumerate}
\end{example}

\begin{theorem}\label{thm24} Let $f : X \to W$ be a $c_2$-contraction of a Calabi-Yau threefold $X$. Then:
\begin{enumerate}
\item Assume that $\dim W = 1$. Then $f : X \to W$ is an abelian fibered Calabi-Yau threefold and $W = \P^1$ and vice versa.
\item Assume that $\dim W = 2$. Then $W = \overline{S}/G$ and $f : X \to W$ is obtained by a composition of flops from $\varphi : G-{\rm Hilb}(S \times E) \to W$ over $W$ for some $S \to \overline{S}$, $E$ and $G$ in Example \ref{ex21} (1) with $\pi_1(G-{\rm Hilb}(S \times E)) = \{1\}$ and vice versa. 
\item Assume that $\dim W = 3$. Then $f : X \to W$ is isomorphic to exactly one of $\pi_3 : X_3 \to \overline{X}_3$ and $\pi_7 : X_7 \to \overline{X}_7$ in Example \ref{ex21} (2) and (3). 
\item $f : X \to W$ in (3) is a maximal $c_2$-contraction in each case. 
\end{enumerate}
\end{theorem}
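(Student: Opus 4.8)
The plan is to treat the three possible values of $\dim W$ separately, in each case extracting a sharp geometric consequence from the vanishing $(c_2(X).f^*H)_X = 0$ and then invoking the appropriate structural input. I begin with $\dim W = 1$. For a general (hence smooth) fiber $F$ of $f$ the normal bundle is trivial, $N_{F/X} \simeq \sO_F$, so the exact sequence $0 \to T_F \to T_X|_F \to N_{F/X} \to 0$ together with $c(\sO_F)=1$ gives $c_2(X)|_F = c_2(T_F)$ and hence $(c_2(X).F)_X = e(F)$, the topological Euler number of $F$. Since $f^*H$ is numerically a positive multiple of $F$, the $c_2$-contraction condition reads exactly $e(F)=0$. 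By the structure of fibrations from Calabi–Yau threefolds over curves (\cite{Og93}), $F$ is either a K3 surface, with $e(F)=24$, or an abelian surface, with $e(F)=0$; thus $e(F)=0$ forces $F$ abelian and $W=\P^1$. The converse is immediate, since an abelian fibration has fibers of Euler number $0$.

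Next, for $\dim W = 2$ the map $f$ is an elliptic fibration, and by Proposition~\ref{prop23} the $c_2$-contraction condition is precisely the Type $II_0$ condition $K_W \equiv 0$ with $(W,0)$ klt, so $W$ is a rational surface with quotient singularities and numerically trivial canonical class. I would first pass to the global index-one (canonical) cover to write $W = \overline{S}/G$ with $\overline{S}$ a K3 or abelian surface and $G$ finite, using the classification of such log–Calabi–Yau surfaces. The Type $II_0$ hypothesis should then force the elliptic fibration to be isotrivial; fixing the generic fiber $E$ and letting $S \to \overline{S}$ be the minimal resolution, one equips $S \times E$ with the diagonal $G$-action and invokes \cite{BKR01}, by which $G\text{-}{\rm Hilb}(S \times E) \to (S \times E)/G$ is a projective crepant resolution, producing the model $\varphi : G\text{-}{\rm Hilb}(S\times E) \to W$ of Example~\ref{ex21}(1). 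Since $X$ and this model are both crepant Calabi–Yau models over $W$, they are connected by a finite sequence of flops over $W$ by the theory of minimal models of threefolds (\cite{Ka08}); the converse direction is the same construction run backwards.

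Finally, for $\dim W = 3$ the morphism $f$ is birational and crepant, $W$ has Gorenstein canonical singularities with $K_W \equiv 0$, and the vanishing of $c_2(X)$ against the big nef class $f^*H$ puts us in the equality case of the Miyaoka–Yau inequality (Theorem~\ref{thm23}, \cite{Mi87}). The associated uniformization forces $X$ to be a crepant resolution of a quotient $A/G$ of an abelian threefold $A$ by a finite $G \subset \Aut(A)$ acting trivially on $H^0(\Omega_A^3)$ with only isolated fixed points; imposing that the resolution $G\text{-}{\rm Hilb}(A)$ be simply connected (so that it is genuinely Calabi–Yau) reduces, by Beauville's classification of torus quotients with $c_1=0$ (\cite{Be83}), to exactly two cases: $G=\mu_3$ acting as $\omega\cdot{\rm id}$ on $E_\omega^3$, and $G=\mu_7$ acting on the Jacobian of the Klein quartic, giving $\pi_3 : X_3 \to \overline{X}_3$ and $\pi_7 : X_7 \to \overline{X}_7$ of Example~\ref{ex21}(2),(3). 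Assertion (4) then follows by checking, in each of these two explicit models, that $f^*N^1(W)_{\R}$ is exactly the null space of the form $(c_2(X).*)_X$, so no strictly larger $c_2$-contraction target can exist.

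The step I expect to be the main obstacle is the case $\dim W = 2$: reconstructing the product $S \times E$ out of an a priori abstract isotrivial elliptic fibration over a singular base, pinning down the diagonal $G$-action precisely enough for \cite{BKR01} to apply, and then controlling \emph{all} crepant models over $W$ by flops. The case $\dim W = 3$ is delicate as well, but there the essential content is the finite enumeration of \cite{Be83} once the Miyaoka–Yau equality case has been invoked, whereas the surface case requires genuine surgery on the fibration itself.
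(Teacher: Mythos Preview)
The paper does not give its own proof of this theorem: it is stated as a result quoted from \cite[Theorem 3.4]{OS01}, reformulated in the language of $G$-Hilbert schemes via \cite{BKR01} (see the sentence immediately preceding Example~\ref{ex21}). There is therefore no in-paper argument to compare against; your proposal is an attempt to reconstruct the content of the cited reference, not of anything in this paper.

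As an outline of how \cite{OS01} proceeds, your sketch is broadly on target for parts (1) and (2): the Euler-number computation for $\dim W = 1$ is exactly the standard argument, and for $\dim W = 2$ the identification with Type $II_0$ via Proposition~\ref{prop23} is the right first move, with the subsequent reconstruction of $(S,E,G)$ indeed being the substantial part. For part (3), however, your attribution of the final enumeration to \cite{Be83} is not accurate: Beauville's paper gives examples of simply-connected torus quotients (including $X_3$) but does not classify them. The reduction to exactly $X_3$ and $X_7$ is carried out in \cite{OS01} itself, by a detailed case analysis of finite groups acting on abelian threefolds with isolated fixed points, trivial action on $H^0(\Omega^3)$, and simply-connected crepant resolution; this is more intricate than invoking a ready-made list. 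Your reading of the Miyaoka--Yau equality case is also slightly optimistic: what one gets directly from $(c_2(X).f^*H)_X=0$ with $f^*H$ nef and big is, after some work, that $X$ admits a finite \'etale-in-codimension-one cover by an abelian threefold, and passing from there to the quotient description requires additional argument (again supplied in \cite{OS01}).
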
 

\begin{remark} $\pi_3 : X_3 \to \overline{X}_3$ in Example \ref{ex1} and in Example \ref{ex21} coincide by Theorem \ref{thm24} (4). For the same reason, an explicit description of $\pi_7 : X_7 \to \overline{X}_7$ in \cite {OS01} and in Example \ref{ex22} also coincide.  
\end{remark} 

We close this subsection by giving some examples of elliptically fibered Calabi-Yau threefolds of Type $II_0$ with a relative automorphism of postive entropy.

\begin{example}\label{ex22} 
\begin{enumerate}
\item Let $X_3$, $\pi_3 : X_3 \to \overline{X}_3$ and $g \in \Aut (X_3)$ be as in Example \ref{ex1}. Consider the following elliptic fibration
$$\pi_{12} : X_3 \xrightarrow{\pi_3} \overline{X}_3 
\xrightarrow{\overline{{\rm pr}}_{12}} W := E_{\omega}^2/\langle \omega \cdot {\rm id}_{E_{\omega}^2} \rangle $$
induced from the projection ${\rm pr}_{12} : E_{\omega}^3 \to E_{\omega}^2$ to the first two factors and $\pi_3$. Then the fibration $\pi_{12}$ is an elliptically fibered Calabi-Yau threefold of Type $II_{0}$, where $S$ in Theorem \ref{thm24} is $E_{\omega}^2$. Moreover, $g \in \Aut(\pi_{12})$ and $g$ is of positive entropy. 

\item Let $S$ be a projective K3 surface with an involution $\iota_S$ such that $\iota_S^*|_{N^1(S)} = {\rm id}_{N^1(S)}$, $\iota_S^{*}\omega_S = -\omega_S$ and the fixed locus $S^{\iota_S}$ is non-empty and consists of finite disjoint uninon of $\P^1$. Then $\iota_S$ is in the center of $\Aut\, (S)$ and there are 11 families of such $S$ according to $S^{\iota_S}$ and parity (\cite[Theorem 4.2.2, Table 1]{Ni83}). In each case $\Aut (S)$ contains an element $g$ of positive entropy (\cite[Remark 3.9]{Yu23}). Let $\mu : S \to \overline{S}$ be the contraction of $S^{\iota}$ and $B = \overline{S}/\iota$. We define the action of $\mu_2 := \langle \iota \rangle$ on $S \times E$ by $\iota_S \times (-1_{E})$. Then $B = \overline{S}/\mu_2$ is a rational surface such that ${\sO_B}(2K_B) \simeq \sO_B$ and the natural morphism
$$\pi_{1} : X := \mu_2-{\rm Hilb}\, (S \times E) \xrightarrow{\pi} (S \times E)/\langle \iota \rangle \xrightarrow{\overline{{\rm pr}}_{1}} S/\mu_2 \xrightarrow{\overline{\mu}} W := \overline{S}/\mu_2 $$
is an elliptic fibration of Type $II_{0}$ on a Calabi-Yau threfold $X$. The automorphism $g_X \in \Aut (\pi_1)$ induced from $g \times {\rm id}_E \in \Aut(S \times E)$ satisfies $d_1(g_X) = d_1(g) >1$.
\end{enumerate}
\end{example}

\section{Proof of Theorem \ref{mainthm1}.} \label{sect3}
\hfill

In this section, we prove Theorem \ref{mainthm1}. 

Let $f : X \to B = \P^1$ be an abelian fibered Calabi-Yau threefold and assume that there is $g \in \Aut (f)$ such that $d_1(g) >1$. 

\begin{lemma}\label{lem31} 
There is $g_1 \in \Aut (X/\P^1)$ such that $d_1(g_1) > 1$. 
\end{lemma}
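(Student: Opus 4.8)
The plan is to reduce everything to the case $g_B = \id_{\P^1}$ and then read off $g_1$ as a power of $g$. The reduction rests on one clean observation: if $g_B$ has finite order $m$, then $f \circ g^m = g_B^m \circ f = f$, so $g^m \in \Aut(X/\P^1)$; and since $g \in \Aut(X)$ we have $(g^m)^* = (g^*)^m$ on $N^1(X)_{\R}$, whence $d_1(g^m) = d_1(g)^m > 1$. Thus $g_1 := g^m$ does the job. Consequently, the entire content of the lemma is the claim that $g_B \in \Aut(\P^1)$ is of finite order.

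To prove this I would exploit that $g$ preserves the fibration, so $g_B$ preserves the discriminant locus $\Sigma \subset \P^1$, the finite set of points over which the fiber of $f$ fails to be a smooth abelian surface. This set is nonempty: a family of abelian surfaces smooth over all of $\P^1$ would be a $C^{\infty}$ fiber bundle over a simply connected base, and the homotopy exact sequence then gives $\pi_1(X^{\mathrm{an}}) \ne \{1\}$, contradicting that $X$ is Calabi-Yau. Now an element of $\Aut(\P^1) = \mathrm{PGL}_2(\C)$ of infinite order is parabolic or loxodromic, so every finite subset it preserves is contained in its fixed-point set, which has at most two points. Hence as soon as $|\Sigma| \ge 3$, the restriction homomorphism $\langle g_B \rangle \to \mathrm{Sym}(\Sigma)$ is injective, $g_B$ is of finite order, and the previous paragraph finishes the proof.

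The delicate part, and where I expect the real work to lie, is excluding (or directly dispatching) the degenerate configurations $|\Sigma| \le 2$, in which $g_B$ could a priori be of infinite order with its fixed points sitting in $\Sigma$. Here I would invoke the monodromy theorem for one-parameter degenerations of abelian surfaces together with $\pi_1(X^{\mathrm{an}}) = \{1\}$. When $|\Sigma| = 1$ the base $\P^1 \setminus \Sigma$ is simply connected, forcing trivial local monodromy and hence good reduction, so that point is not in $\Sigma$ after all, a contradiction. When $|\Sigma| = 2$ the monodromy is generated by a single quasi-unipotent $T$, and I would argue that the positive-entropy, hence hyperbolic, action of $g$ on the cohomology of a fiber commutes with $T$; this compatibility should force $T$ to be of finite order, the family to be isotrivial of quotient-of-product type over $\P^1 \setminus \Sigma$, and a commuting fiberwise positive-entropy automorphism to descend and extend by properness to the desired $g_1 \in \Aut(X/\P^1)$ with $d_1(g_1) = d_1(g) > 1$. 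I regard this $|\Sigma| = 2$ analysis, reconciling quasi-unipotent monodromy with a hyperbolic equivariant action, as the main obstacle of the argument.
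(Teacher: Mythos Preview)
Your strategy is exactly the paper's: show that $g_B$ has finite order $m$ and take $g_1 = g^m$. The paper, however, does not enter the case analysis $|\Sigma|\le 2$ at all. It cites the theorem of Viehweg--Zuo (\emph{On the isotriviality of families of projective manifolds over curves}, J.~Algebraic Geom.~\textbf{10} (2001)): since the total space $X$ has non-negative Kodaira dimension (here $\kappa(X)=0$), the fibration $f:X\to\P^1$ must have at least three singular fibers. Thus $|\Sigma|\ge 3$ is automatic, and your argument that an element of $\mathrm{PGL}_2(\C)$ stabilizing a finite set of $\ge 3$ points has finite order finishes the proof in one line.

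So the ``main obstacle'' you identify, the case $|\Sigma|=2$, simply does not arise. Your treatment of that case is, as you acknowledge, speculative: the sentence ``the positive-entropy action of $g$ on the cohomology of a fiber commutes with $T$'' is not justified when $g_B$ has infinite order, since $g$ then fixes no smooth fiber and you would first have to make sense of such a fiberwise action via the local system and check equivariance. None of this is needed once Viehweg--Zuo is invoked; the lemma is much shorter than you feared.
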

\begin{proof} Since $X$ is of non-negative Kodaira dimension, $f$ has at least three singular fibers by Viehweg-Zuo \cite{VZ01}. Let $N \subset B = \P^1$ be the set of critical values of $f$. Then the automorphism $g_B$ 
of $B$ induced from $g \in \Aut (f)$ acts on $N$. Then, for $n := |N|!$, the induced automorphism $g^n|_B = g_B^n$ acts on the set $N$ as identity. Since $|N| \ge 3$ and $B= \P^1$, it follows that $g^n|_B = \id_B$. Hence $g^n \in \Aut (X/\P^1)$. Moreover $d_1(g^n) = d_1(g)^n >1$. Hence we may take $g_1 = g^n$.
\end{proof}

By Lemma \ref{lem31}, we may and will assume that there is $g \in \Aut (X/\P^1)$ such that $d_1(g) > 1$. Let $H$ be a very ample smooth divisor on $X$ and let $A$ be a general smooth fiber of $f$. Then $A$ is an abelian surface and we have $g|_A \in \Aut (A)$ as $g \in \Aut (X/\P^1)$.

\begin{lemma}\label{lem32} 
There is a non-zero nef real divisor $D_1$ on $X$ such that $g^*D_1 = d_1(g)D_1$ in $N^1(X)_{\R}$. This $D_1$ satisfies the following intersection properties:
$$(c_2(X).D_1)_X = 0\,\, ,\,\,(D_1.A.H)_X > 0.$$
\end{lemma}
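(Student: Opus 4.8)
The plan is to produce $D_1$ from the Perron--Frobenius--Birkhoff theorem and then extract the two intersection-theoretic properties separately. Since $g \in \Aut (X/\P^1) \subset \Aut (X)$ and $d_1(g) > 1$, Theorem~\ref{thm21}(2) provides a nonzero class $D_1 \in {\rm Nef}\,(X)$ with $g^*D_1 = d_1(g)D_1$ in $N^1(X)_{\R}$; this is the required nef eigenvector, so the only content left is the two displayed intersection identities.

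For the vanishing $(c_2(X).D_1)_X = 0$, the key point is that an automorphism preserves Chern classes and the intersection pairing: because $g^*T_X \simeq T_X$ we have $g^*c_2(X) = c_2(X)$ in $H^4(X,\R)$, whence
\[ (c_2(X).D_1)_X = (g^*c_2(X).g^*D_1)_X = d_1(g)\,(c_2(X).D_1)_X. \]
Since $d_1(g) > 1$, this forces $(c_2(X).D_1)_X = 0$. (This refines the nonnegativity of $c_2(X)$ on ${\rm Nef}\,(X)$ recorded in Theorem~\ref{thm23}.)

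For the positivity $(D_1.A.H)_X > 0$, I would first rewrite the triple product as a product on the smooth fiber, $(D_1.A.H)_X = (D_1|_A . H|_A)_A$, where $H|_A$ is ample on the abelian surface $A$ and $D_1|_A$ is nef. By the Hodge index theorem on the surface $A$ (the intersection form is negative definite on the orthogonal complement of an ample class), a nonzero nef class meets an ample class strictly positively; so it suffices to prove $D_1|_A \not\equiv 0$. To this end, consider the subspace $V_0 \subset N^1(X)_{\R}$ of classes restricting to $0$ on the general fiber, i.e. the kernel of the restriction map $N^1(X)_{\R} \to N^1(A)_{\R}$. Since $g \in \Aut (X/\P^1)$ has $g_B = \id$, it maps each fiber of $f$ to itself, so $V_0$ is $g^*$-invariant; moreover $V_0$ is spanned by the fiber class $[A]$ together with the classes of the finitely many irreducible components of the fibers of $f$. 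As $g$ merely permutes these components and fixes $[A]$, some power of $g^*$ acts as the identity on $V_0$, so every eigenvalue of $g^*|_{V_0}$ is a root of unity. Because $d_1(g) > 1$ is not a root of unity, the eigenvector $D_1$ cannot lie in $V_0$, i.e. $D_1|_A \not\equiv 0$, and the positivity follows.

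The main obstacle is precisely this last step: identifying $V_0$ with the span of the vertical classes and verifying that $g^*$ acts on it with spectral radius $1$. This is where the hypothesis $g \in \Aut (X/\P^1)$ is essential, since $g_B = \id$ guarantees that $g$ preserves each fiber and acts on fiber components through a finite permutation; it is exactly this that prevents the leading eigenvalue $d_1(g)$ from being carried by a vertical class, and hence forces $D_1$ to restrict nontrivially to the general fiber.
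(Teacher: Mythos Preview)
Your argument for the existence of $D_1$ and for $(c_2(X).D_1)_X = 0$ is correct and matches the paper's. The gap is in your proof of $(D_1.A.H)_X > 0$: the assertion that the kernel $V_0$ of $N^1(X)_\R \to N^1(A)_\R$ is spanned by the classes of irreducible fiber components is not justified, and in fact fails for abelian fibered threefolds in general. Already for an elliptic surface $S \to \P^1$ with two distinct sections $P \ne O$, the class $P-O$ restricts to degree zero on each smooth fiber but is not a multiple of the fiber class in $N^1(S)$ (otherwise $P\equiv O$, forcing $P\cdot O=O^2=-1<0$); pulling this back along the first projection $p_1 : S \times_{\P^1} S' \to S$ to a Schoen-type abelian fibered Calabi--Yau threefold produces a class lying in $V_0$ but not in the span of vertical divisors. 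Since at this stage of the proof you do not yet know that $X \cong X_3$, you cannot exclude such contributions to $V_0$, and so the conclusion that $g^*|_{V_0}$ has only roots of unity as eigenvalues does not follow.

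The paper sidesteps this by restricting to the very ample surface $H$ rather than to the fiber $A$: if $(D_1.A.H)_X = 0$ then the nef classes $D_1|_H$ and $A|_H$ on $H$ have zero intersection and are hence proportional by the Hodge index theorem; the Lefschetz hyperplane theorem (valid since $\dim X \ge 3$) gives $N^1(X)_\R \hookrightarrow N^1(H)_\R$, so $D_1$ and $A$ are already proportional in $N^1(X)_\R$, contradicting their distinct $g^*$-eigenvalues $d_1(g)\ne 1$. The point is that restriction to an ample $H$ is injective on $N^1$ by Lefschetz, whereas restriction to a fiber is not.
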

\begin{proof} By Theorem \ref{thm21}, there is a non-zero nef real divisor $D_1$ on $X$ such that $g^*D_1 = d_1(g)D_1$ in $N^1(X)_{\R}$. Then 
$$(c_2(X).D_1)_X = (g_*c_2(X).D_1)_X = (c_2(X).g^*D_1)_X = d_1(g)(D_1.c_2(X))_X.$$
Thus $(c_2(X).D_1)_X = 0$ by $d_1(g) > 1$. 

Consider the nef real divisor classes $D_1|_H$ and $A|_H$ on a smooth projective surface $H$. If $(D_1.A.H)_X = 0$, then $(D_1|_{H}.A|_{H})_H = 0$ on $H$. Then $D_1|_{H}$ and $A|_{H}$ are proportional in $N^1(H)_{\R}$ by the Hodge index theorem. Then $D_1$ and $A$ are proportional in $N^1(X)_{\R}$ by the Lefschetz hyperplane section theorem (as $\dim X \ge 3$ and smooth). However, this is impossible as $D_1$ and $A$ are eigenvectors of $g^*|_{N^1(X)_{\R}}$ of different eigenvalues $d_1(g) \not= 1$. Thus $(D_1.A.H)_X \not= 0$. Hence $(D_1.A.H)_X > 0$ as it is the intersection numbers of nef divisors.
\end{proof}
 
\begin{lemma}\label{lem33} 
$d_1(g^{-1}) >1$ and there is a non-zero nef real divisor $D_2$ on $X$ such that $g^*D_2 = d_1(g)^{-1}D_2$ in $N^1(X)_{\R}$ and this $D_2$ satisfies the following intersection 
properties:
$$(c_2(X).D_2)_X = 0\,\, ,\,\, (D_2.A.H)_X > 0.$$
\end{lemma}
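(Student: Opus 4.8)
The plan is to reduce the statement to the already-proved Lemma \ref{lem32}, applied to $g^{-1}$ in place of $g$, once the key identity $d_1(g^{-1}) = d_1(g) > 1$ is in hand; so the heart of the matter is this equality of dynamical degrees, which I would extract from a $g^*$-invariant Lorentzian form built out of the fiber class. Since $g \in \Aut(X/\P^1)$ preserves each fiber we have $g^*A = A$, and as $g$ is biregular it preserves the triple intersection product. Hence the symmetric bilinear form $q$ on $V := N^1(X)_{\R}$ defined by $q(x,y) := (x.y.A)_X$ is $g^*$-invariant, because $q(g^*x, g^*y) = (g^*x.g^*y.g^*A)_X = (x.y.A)_X$. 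Restricting to a general smooth fiber, the projection formula gives $q(x,y) = (x|_A . y|_A)_A$, so $q$ is the pullback along $r : N^1(X)_{\R} \to N^1(A)_{\R}$ of the intersection form of the abelian surface $A$. By the Hodge index theorem that form has exactly one positive direction, and $q(H,H) = (H|_A)^2 > 0$; thus $q$ has a one-dimensional positive part and radical $K := \ker q$. As $g^*$ is an invertible isometry of $q$ it preserves $K$ and descends to an isometry $\bar g$ of the induced nondegenerate Lorentzian form on $V/K$.

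Next I would locate the reciprocal eigenvalue. Write $\lambda := d_1(g) > 1$. Because $(D_1.A.H)_X = q(D_1,H) > 0$ by Lemma \ref{lem32}, the class $D_1$ does not lie in $K$, so $\bar D_1 \neq 0$ is an eigenvector of $\bar g$ with eigenvalue $\lambda$; since $\bar g$ is induced by $g^*$ on a quotient, its eigenvalues form a subset of those of $g^*$ and its spectral radius is at most $\lambda$, hence exactly $\lambda$. An isometry of a nondegenerate form has its eigenvalue set stable under $\mu \mapsto \mu^{-1}$ (as $\bar g$ is conjugate through the Gram matrix to $(\bar g^{-1})^{T}$), so $\lambda^{-1}$ is an eigenvalue of $\bar g$, and therefore $\lambda$ is an eigenvalue of $\bar g^{-1}$, that is, of $(g^{-1})^*$. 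This already yields $d_1(g^{-1}) \geq \lambda > 1$, which is the first assertion, and moreover shows that every eigenvalue of $\bar g$, hence of $\bar g^{-1}$, has modulus in the interval $[\lambda^{-1}, \lambda]$.

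Finally I would repeat the argument of Lemma \ref{lem32} for $g^{-1}$, which is now legitimate since $d_1(g^{-1}) > 1$. Theorem \ref{thm21} furnishes a nonzero nef class $D_2$ with $(g^{-1})^*D_2 = d_1(g^{-1})D_2$; the verbatim computation $(c_2(X).D_2)_X = d_1(g^{-1})(c_2(X).D_2)_X$ forces $(c_2(X).D_2)_X = 0$, and the Hodge-index plus Lefschetz argument, using that $D_2$ and $A$ are eigenvectors of $(g^{-1})^*$ for the distinct eigenvalues $d_1(g^{-1}) \neq 1$ and $1$, forces $(D_2.A.H)_X > 0$. In particular $D_2 \notin K$, so $\bar D_2 \neq 0$ is an eigenvector of $\bar g^{-1}$ with eigenvalue $d_1(g^{-1})$; by the previous paragraph $d_1(g^{-1}) \leq \lambda$, whence $d_1(g^{-1}) = \lambda = d_1(g)$. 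Rewriting the eigenvalue relation then gives $g^*D_2 = d_1(g^{-1})^{-1}D_2 = d_1(g)^{-1}D_2$, with all the asserted intersection properties of $D_2$ already established.

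The step I expect to be the main obstacle is precisely the control of the radical $K$ of $q$: a priori the action $g^*|_K$ could carry eigenvalues of small modulus that would make $d_1(g^{-1})$ strictly exceed $d_1(g)$, and no direct bound on $g^*|_K$ is available. The device that circumvents this is to realize $d_1(g^{-1})$ itself on the Lorentzian quotient $V/K$, via the nef eigenvector $D_2$ together with the inequality $(D_2.A.H)_X > 0$ guaranteeing $D_2 \notin K$, rather than attempting to analyze $g^*|_K$ at all.
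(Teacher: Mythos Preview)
Your proof is correct, and it differs from the paper's in an interesting way. The paper's argument for $d_1(g^{-1})>1$ is a two-line appeal to integrality: since $g^*$ acts on the lattice $N^1(X)$ one has $|\det g^*|=1$, and since $(g^{-1})^*=(g^*)^{-1}$ already has the eigenvalue $d_1(g)^{-1}<1$, the product constraint forces some eigenvalue of modulus $>1$; then Lemma~\ref{lem32} is applied verbatim to $g^{-1}$. Note that this yields only $(g^{-1})^*D_2=d_1(g^{-1})D_2$, i.e.\ $g^*D_2=d_1(g^{-1})^{-1}D_2$, and the paper does not spell out why $d_1(g^{-1})=d_1(g)$ (for the sequel in Lemma~\ref{lem34} only $d_1(g^{-1})>1$ is actually used, so this is harmless there). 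Your route via the $g^*$-invariant Lorentzian form $q(x,y)=(x.y.A)_X$ on $N^1(X)_{\R}/K$ is more elaborate but genuinely proves the equality $d_1(g^{-1})=d_1(g)$ stated in the lemma: the reciprocal symmetry of the spectrum of an isometry, together with the key observations $q(D_1,H)>0$ and $q(D_2,H)>0$ placing both eigenvectors outside the radical, pins down both dynamical degrees on the Lorentzian quotient. In short, the paper's approach is quicker and relies only on the lattice structure, while yours exploits the fiberwise Hodge index theorem to obtain the sharper (and literally stated) conclusion.
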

\begin{proof} Since $N^1(X)$ is a free $\Z$-module of finite rank, it follows that $|{\rm det}\, g^*| = 1$. Since $d_1(g)^{-1}$ is an eigenvalue of $(g^{-1})^*$ with $0 < d_1(g)^{-1} <1$, it follows that $(g^{-1})^*$ has an eigenvalue of modulus $>1$. Thus $d_1(g^{-1}) > 1$. Now applying Lemma \ref{lem32} for $g^{-1}$, we obtain a non-zero real nef divisor $D_2$ with desired properties. 
\end{proof}

Set $M := A+D_1+D_2$ in $N^1(X)_{\R}$. We call $x \in N^1(X)_{\R}$ big, if it is in the big cone ${\rm Big}\, (X)$, which is an open cone in $N^1(X)_{\R}$ generated by the classes of big divisors (\cite[Lemma 1.7]{Ka88}).

\begin{lemma}\label{lem34} 
$M$ is a real nef and big class with $(c_2(X).M)_X = 0$. 
\end{lemma}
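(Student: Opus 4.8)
The plan is to verify the three asserted properties of $M$ separately: nefness and the $c_2$-condition are immediate, while bigness requires the real work.

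\textbf{Nefness.} The classes $A$, $D_1$, $D_2$ are all nef — a general fibre is nef, and $D_1$, $D_2$ are nef by Lemmas \ref{lem32} and \ref{lem33} — so their sum $M$ is nef because ${\rm Nef}\,(X)$ is a convex cone.

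\textbf{The identity $(c_2(X).M)_X = 0$.} Lemmas \ref{lem32} and \ref{lem33} give $(c_2(X).D_1)_X = (c_2(X).D_2)_X = 0$, so it remains to check $(c_2(X).A)_X = 0$. Since $A$ is a general fibre over a point of the curve $\P^1$, its normal bundle $N_{A/X}$ is trivial, so from $0 \to T_A \to T_X|_A \to N_{A/X} \to 0$ the restriction of $c_2(X)$ to $A$ equals $c_2(T_A)$, whose degree is the topological Euler number $e(A) = 0$ of the abelian surface $A$. Hence $(c_2(X).A)_X = 0$ and $(c_2(X).M)_X = 0$.

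\textbf{Bigness.} Since $M$ is nef, by the standard criterion it suffices to show $(M^3)_X > 0$. First I would expand $(A+D_1+D_2)^3$ and exploit two sets of relations. As $A$ is a fibre of $f$ over $\P^1$, one has $A^2 = 0$ as a $1$-cycle, so every monomial containing $A^2$ vanishes. Next, because $g \in \Aut (X/\P^1)$ we have $g^*A = A$, $g^*D_1 = d_1(g)D_1$, $g^*D_2 = d_1(g)^{-1}D_2$, and $g^*$ preserves the triple intersection form; comparing each remaining monomial with its $g^*$-image and using $d_1(g) \neq 1$ forces $(D_1^3)_X = (D_2^3)_X = (D_1^2.D_2)_X = (D_1.D_2^2)_X = (D_1^2.A)_X = (D_2^2.A)_X = 0$. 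Only the mixed term survives, giving $(M^3)_X = 6(A.D_1.D_2)_X$.

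The crux, and the main obstacle, is to prove $(A.D_1.D_2)_X > 0$. Restricting to the general fibre, $(A.D_1.D_2)_X = (D_1|_A . D_2|_A)_A$ is an intersection number of two nef classes on the abelian surface $A$, hence $\ge 0$; the point is strict positivity. Since $g$ preserves $A$, we have $g|_A \in \Aut (A)$, and $D_1|_A$, $D_2|_A$ are eigenvectors of $(g|_A)^*$ on $N^1(A)_{\R}$ with eigenvalues $d_1(g)$ and $d_1(g)^{-1}$; these are distinct (as $d_1(g) > 1$), so $D_1|_A$ and $D_2|_A$ are linearly independent, and each is nonzero because $(D_i.A.H)_X > 0$. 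Comparing self-intersections with their $(g|_A)^*$-images yields $(D_1|_A)^2 = (D_2|_A)^2 = 0$. If in addition $(D_1|_A.D_2|_A)_A = 0$, then $D_1|_A$ and $D_2|_A$ would span a $2$-dimensional totally isotropic subspace of $N^1(A)_{\R}$, contradicting the Hodge index theorem, since a form of signature $(1,\rho(A)-1)$ admits totally isotropic subspaces of dimension at most $1$. Hence $(D_1|_A.D_2|_A)_A \neq 0$, and being an intersection of nef classes it is positive. Therefore $(M^3)_X = 6(A.D_1.D_2)_X > 0$, and the nef class $M$ is big.
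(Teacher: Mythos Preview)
Your proof is correct and follows essentially the same route as the paper: nefness by convexity, the $c_2$-identity by linearity together with $c_2(A)=0$, and bigness by reducing $(M^3)_X>0$ to $(D_1|_A.D_2|_A)_A>0$ via the Hodge index theorem on the fibre $A$. The only cosmetic difference is that the paper bounds $(M^3)_X\ge (A.D_1.D_2)_X$ using nonnegativity of intersections of nef classes, whereas you compute $(M^3)_X=6(A.D_1.D_2)_X$ exactly by killing the other monomials with the eigenvalue relations; both land on the same key inequality.
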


\begin{proof} Since $A$, $D_1$ and $D_2$ are nef, so is $M$. Since 
$$(c_2(X).A)_X = c_2(A) = 0\,\, , \,\,(c_2(X).D_1)_X = (c_2(X).D_2)_X= 0$$
we have $(c_2(X).M)_X = 0$ as well. Again, since $A$, $D_1$ and $D_2$ are nef, 
$$(M^3)_X = ((A+D_1+D_2)^3)_X \ge (A.D_1.D_2)_X = (D_1|_A.D_2|_A)_A.$$
Note that $D_1|_A$ and $D_2|_A$ are non-zero nef eigenvectors of $g^*|_A$ on $N^1(A)_{\R}$ of different eigenvalues $d_1(g|_A) >1$ and $d_1(g|_A)^{-1} <1$ by Lemmas \ref{lem32} and \ref{lem33}. Thus, the nef vectors $D_1|_A$ and $D_2|_A$ are not proportional and therefore $(D_1|_A.D_2|_A)_A > 0$ by the Hodge index theorem on $A$. Hence $(M^3)_X > 0$. Thus $M$ is big by \cite[Lemma 2.3]{FM23}, even if $M$ is not rational.
\end{proof}

\begin{lemma}\label{lem35} 
There is a nef and big Cartier divisor $D$ on $X$ such that $(c_2(X).D)_X = 0$. \end{lemma}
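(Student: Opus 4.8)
The plan is to rationalize the real nef and big class $M$ of Lemma \ref{lem34} into a genuine Cartier class, all the while keeping it inside the hyperplane $(c_2(X).*)_X = 0$. Write $H_{c_2} := \{x \in N^1(X)_{\R}\,|\,(c_2(X).x)_X = 0\}$; by Subsection \ref{subsect22} this is a rational hyperplane, and by Theorem \ref{thm23} the face $\mathcal{N} := {\rm Nef}\,(X) \cap H_{c_2}$ is disjoint from ${\rm Amp}\,(X)$, so $\mathcal{N} \subset \partial {\rm Nef}\,(X)$. Lemma \ref{lem34} furnishes $M \in \mathcal{N}$ with $(M^3)_X > 0$, i.e. a real nef and big element of this face.

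First I would bring in the maximal $c_2$-contraction $f_0 : X \to W_0$ recalled in Subsection \ref{subsect23} (\cite[Lemma-Definition 4.1]{OS01}). The key input from the $c_2$-contraction theory is that this face is precisely the pullback of the nef cone of the base, $\mathcal{N} = f_0^*\,{\rm Nef}\,(W_0)$; in particular $\mathcal{N}$ is a rational cone and $M = f_0^* M'$ for some nef $\R$-class $M'$ on $W_0$. I would then read off $\dim W_0$ from bigness: by the projection formula $(M^3)_X = (f_0^* M')^3$, and $c_1(M')^3$ lies in $H^6(W_0^{{\rm an}};\R)$, which vanishes as soon as $\dim W_0 \le 2$; hence $(M^3)_X = 0$ in that case, contradicting Lemma \ref{lem34}. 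Therefore $\dim W_0 = 3$ and $f_0$ is birational.

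With $\dim W_0 = 3$ in hand, I would simply set $D := f_0^* H_0$ for a very ample Cartier divisor $H_0$ on $W_0$. Then $D$ is a Cartier divisor; it is nef as the pullback of an ample class; it satisfies $(c_2(X).D)_X = 0$ because $f_0$ is a $c_2$-contraction; and it is big because $(D^3)_X = (H_0^3)_{W_0} > 0$, a nef class of positive top self-intersection being big by \cite[Lemma 2.3]{FM23}. Alternatively, once $\mathcal{N}$ is known to be rational and $M \in \mathcal{N}$ is big, I could instead choose any rational class of $\mathcal{N}$ close enough to $M$ (bigness being an open condition) and clear denominators; either route lands on the desired Cartier $D$.

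The main obstacle is the italicized input of the second paragraph, namely that the whole $c_2$-null nef face $\mathcal{N}$ is the rational pullback cone $f_0^*{\rm Nef}\,(W_0)$ of the maximal $c_2$-contraction. This is exactly where one must invoke the $c_2$-contraction theorem of \cite{OS01} together with the Miyaoka--Yau positivity of Theorem \ref{thm23}: the delicate point is that $M$ is only a real class, and it is precisely this rationality of the face that allows the irrational $M$ to be approximated by honest Cartier classes without leaving ${\rm Nef}\,(X) \cap H_{c_2}$. Granting this identification, the dimension count and the choice of $D$ are routine.
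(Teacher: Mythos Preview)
Your approach is genuinely different from the paper's, and the gap you yourself flag as ``the main obstacle'' is real and is not filled by the references you cite.

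The identity $\mathcal{N} = f_0^*\,{\rm Nef}\,(W_0)$ is \emph{not} what the maximality of the $c_2$-contraction in \cite[Lemma-Definition~4.1]{OS01} says. Maximality asserts that every $c_2$-\emph{contraction} $X \to W$ factors through $f_0$; equivalently, every \emph{rational} semiample class $v$ with $(c_2(X).v)_X = 0$ lies in $f_0^*N^1(W_0)_{\Q}$ (since such $v$ defines a morphism by the base-point-free theorem, as $K_X = 0$). It says nothing about an arbitrary \emph{real} nef class in $H_{c_2}$, and there is no a priori reason that $M$ --- or $D_1$, $D_2$ individually --- lies in the rational subspace $f_0^*N^1(W_0)_{\R}$. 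Your step ``$M = f_0^*M'$'' is therefore unjustified, and without it the projection-formula dimension count for $W_0$ does not get off the ground. The $c_2$-contraction theorem (Theorem~\ref{thm24}) classifies the possible $W_0$ once a birational $c_2$-contraction is known to exist; it does not produce one from a real nef big class.

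The paper proceeds differently and does not use $f_0$ at all in this step. It invokes Kawamata's result \cite[Theorem~5.7]{Ka88} that ${\rm Nef}\,(X) \cap {\rm Big}\,(X)$ is \emph{locally rational polyhedral} inside ${\rm Big}\,(X)$. Taking the minimal face $F$ of this cone containing $M$ in its relative interior, $F$ is cut out near $M$ by rational hyperplanes, so rational points are dense in $F$ near $M$. Since $(c_2(X).*)_X \ge 0$ on ${\rm Nef}\,(X)$ (Theorem~\ref{thm23}) and vanishes at the interior point $M$, the whole face $F$ lies in $H_{c_2}$. Any rational point of $F$ near $M$ is then nef, big, and $c_2$-null, and clearing denominators gives $D$. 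Your ``alternatively'' sentence gestures at exactly this, but the missing ingredient --- why $\mathcal{N}$ (or the relevant face of it) is rational near $M$ --- is precisely Kawamata's theorem, which you never invoke. In effect, to close your gap you would need the paper's argument anyway.
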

\begin{proof} Since $K_X = 0$, by a result of Kawamata \cite[Theorem 5.7]{Ka88}, ${\rm Nef}\, (X) \cap {\rm Big}\, (X)$ is a locally rational polyhedral cone in ${\rm Big}\, (X)$. Therefore, there are an open ball $U$ centered at $M$ in ${\rm Big}\, (X)$ and a unique minimal dimensional face $F$ of ${\rm Nef}\, (X) \cap {\rm Big}\, (X)$, containing $M$ as its interior, such that the face $F$ is the intersection of rational hyperplanes in $U$. In particular, $F \cap U \cap N^1(X)_{\Q}$, where $N^1(X)_{\Q} = N^1(X) \otimes_{\Z} \Q$, is dense in $F \cap U$. Since $(c_2(X).*)_X = 0$ is a rational hyperplane such that $(c_2(X).M)_X = 0$ and $(c_2(X).*)_X \ge 0$ on ${\rm Nef}\, (X)$ by Theorem \ref{thm23}, it follows that $F \cap U$ is contained in the hyperplane $(c_2(X).*)_X = 0$ by the minimality. Therefore, there is a rational divisor $D'$ whose class is in $F \cap U \cap N^1(X)_{\Q}$. Then $D'$ is a rational nef and big divisor with $(c_2(X).D')_X = 0$. Multiplying $D'$ by some positive integer, we obtain a desired divisor $D$. 
\end{proof}

The next lemma completes the proof of Theorem \ref{mainthm1}.
\begin{lemma}\label{lem36} 
$f : X \to B$ is isomorphic to $\pi_3 : X_3 \to \P^1$ as fiber spaces. 
\end{lemma}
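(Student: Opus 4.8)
The plan is to manufacture from the divisor $D$ of Lemma \ref{lem35} a \emph{three-dimensional} $c_2$-contraction and then read off the answer from the classification in Theorem \ref{thm24}. Since $K_X \sim 0$, the class $D$ is nef and big, so by the base-point-free theorem (applied to the nef and big $K_X + D \equiv D$) the linear system $|mD|$ is free for $m \gg 0$; after Stein factorization it defines a fibration $\psi : X \to W$ onto a normal projective threefold $W$ with $mD = \psi^* H$ for an ample class $H$ on $W$. Because $D$ is big, $\psi$ is birational, so $\dim W = 3$, and $(c_2(X).\psi^*H)_X = m(c_2(X).D)_X = 0$ shows that $\psi$ is a $c_2$-contraction. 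Hence by Theorem \ref{thm24} (3) and (4), $\psi : X \to W$ is isomorphic to exactly one of $\pi_3 : X_3 \to \overline{X}_3$ or $\pi_7 : X_7 \to \overline{X}_7$ of Example \ref{ex21}, and in either case it is the maximal $c_2$-contraction of $X$.

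Next I would compare this with the given abelian fibration. The fibration $f : X \to \P^1$ is itself a $c_2$-contraction: a general fibre $A$ is an abelian surface, so $(c_2(X).A)_X = c_2(A) = 0$, and an ample class on $\P^1$ pulls back to a multiple of $A$ (this is the ``vice versa'' in Theorem \ref{thm24} (1)). Since $\psi$ is the maximal $c_2$-contraction, $f$ factors through it: there is a morphism $\nu : W \to \P^1$ with $f = \nu \circ \psi$. As $\psi$ is birational and $f$ has connected fibres, $\nu$ is a fibration whose general fibre is birational to $A$; that is, $\nu$ is an abelian fibration on $W$.

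The heart of the argument, and the step I expect to be the \textbf{main obstacle}, is to exclude $W = \overline{X}_7 = A_7/\mu_7$. Lifting $\nu$ to the abelian threefold $A_7$, the composite $A_7 \to \overline{X}_7 \xrightarrow{\nu} \P^1$ is constant on $\mu_7$-orbits, so $g_7$ preserves each of its fibres; Stein factorization then yields a connected-fibre fibration $h : A_7 \to C'$ whose fibres are, by the structure theory of morphisms from abelian varieties, translates of a two-dimensional abelian subvariety $K \subset A_7$ with $C' = A_7/K$ an elliptic curve. The fibre of $h$ through the origin is $K$ itself, and since $g_7$ is a group automorphism permuting these fibres with $g_7(0)=0$, we get $g_7(K) = K$, so $K$ is a proper nonzero $g_7$-invariant abelian subvariety. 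But the eigenvalues $\zeta_7, \zeta_7^2, \zeta_7^4$ of $g_7$ on $H^0(\Omega^1_{A_7})$ together with their conjugates exhaust the primitive $7$th roots of unity, so the characteristic polynomial of $g_7$ on $H_1(A_7,\Q)$ is the irreducible cyclotomic polynomial $\Phi_7$; hence $H_1(A_7,\Q)$ has no proper nonzero $g_7$-invariant $\Q$-subspace and no such $K$ exists. This contradiction forces $W = \overline{X}_3$ and $X \simeq X_3$.

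Finally I would identify the fibration itself. Running the same analysis on $\overline{X}_3 = E_\omega^3/\mu_3$ presents $\nu$ as the descent of a quotient $E_\omega^3 \to E_\omega^3/K$ by a two-dimensional abelian subvariety $K$, with $E_\omega^3/K \simeq E_\omega$ and base $E_\omega/\mu_3 = \P^1$. Here $\mu_3 = \langle \omega \cdot {\rm id}\rangle$ acts as the central scalar $\omega$, so every such $K$ is automatically invariant, and since $\Z[\omega]$ is a principal ideal domain the group $\Aut(E_\omega^3) \supset {\rm GL}_3(\Z[\omega])$ acts transitively on the rank-two direct summands. Choosing an automorphism that commutes with the scalar $\mu_3$ (hence descends to $\overline{X}_3$ and lifts to $\Aut(X_3)$) and carries $K$ to $\ker {\rm pr}_3$, I would thereby identify $\nu$ with $\overline{{\rm pr}}_3$, and so $f = \nu \circ \psi$ with $\varphi_3 : X_3 \to \P^1$ of Example \ref{ex1} as fibre spaces, as desired.
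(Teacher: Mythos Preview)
Your argument is correct and follows the same architecture as the paper's proof: produce a birational $c_2$-contraction from $D$, invoke Theorem~\ref{thm24}(3)(4) to identify it with $\pi_3$ or $\pi_7$, factor the abelian fibration $f$ through it by maximality, exclude $\overline{X}_7$, and finally identify the abelian fibration on $X_3$ up to isomorphism.

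The only substantive difference is that the paper outsources the last two steps to \cite{OS01}: it quotes \cite[Lemma~4.2]{OS01} for the fact that $\overline{X}_7$ admits no non-trivial fibration, and \cite[Proposition~3.2(2)]{OS01} for the uniqueness of the abelian fibration on $X_3$. You instead supply self-contained proofs of exactly what is needed. Your exclusion of $\overline{X}_7$ via the irreducibility of $\Phi_7$ on $H_1(A_7,\Q)$ is precisely the reason $A_7$ has no proper $g_7$-invariant abelian subvariety (indeed $A_7$ is simple), and your transitivity argument for $\mathrm{GL}_3(\Z[\omega])$ on rank-two direct summands of $\Z[\omega]^3$ is the standard PID argument underlying the cited proposition. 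One small point worth making explicit in your write-up: in the Stein factorization $E_\omega^3 \to C' \to \P^1$, the generic fibre of $\nu$ is irreducible, so its preimage in $E_\omega^3$ splits into a $\mu_3$-orbit of three components, giving $\deg(C'\to\P^1)=3$ and hence $C' \simeq E_\omega$ with $\P^1 = C'/\mu_3$; this is implicit in your final paragraph but deserves a sentence.
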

\begin{proof}
Since $D$ in Lemma \ref{lem35} is semi-ample as $D = K_X +D$, the linear system $|mD|$ with some large divisible $m$ defines a birational contraction $\pi : X \to \overline{X}$ with $mD = \nu^*H$ for some ample divisor on $\overline{X}$. Then, we have 
$$(c_2(X).\nu^*H)_X = (c_2(X).mD)_X =0.$$
Therefore $\nu : X \to \overline{X}$ is a birational $c_2$-contraction. Thus by Theorem \ref{thm24}, $\pi : X \to \overline{X}$ is isomorphic to either $\pi_3 : X_3 \to \overline{X}_3$ or $\pi_7 : X_7 \to \overline{X}_7$ under the notation in Theorem \ref{thm24}. By the maximality in Theorem \ref{thm24} (4), the morphism $\nu : X \to \overline{X}$ factors through $f : X \to B$, as an abelian fibration is a $c_2$-contraction. Since $\overline{X}_7$ admits no non-trivial fibration \cite[Lemma 4.2]{OS01}, it follows that $X$ is isomorphic to $X_3$ and $f : X \to B$ is isomorphic to an abelian fibration on $X_3$. Again by \cite[Proposition 3.2 (2), see also Page 73]{OS01} and by the fact that any abelian fibration of $X_3$ factors through $\pi_3 : X_3 \to \overline{X}_3$ by the maximality, any abelian fibration on $X_3$ is isomorphic to $\varphi_3 : X_3 \to \P^1$ as fibered varieties. Hence $f : X \to B$ is isomorphic to $\varphi_3 : X_3 \to \P^1$ as fibered varieties. 
\end{proof}

This completes the proof of Theorem \ref{mainthm1}.

\section{Proof of Theorem \ref{mainthm2}.}\label{sect4}

In this section, we prove Theorem \ref{mainthm2}. Let $f : X \to B$ be an elliptically fibered Calabi-Yau threefold with $g \in {\rm Bir}\, (f)$ such that $d_1(g) > 1$. Since $X$ is a smooth projective variety with nef canonical divisor (hence is minimal), $g$ is isomorphic in codimension one (\cite{Ka08}). Let $g_B \in {\rm Bir}\, (B)$ be a birational automorphism of $B$ induced from $g \in {\rm Bir}\, (f)$.

\begin{proposition}\label{prop41} There is an elliptically fibered Calabi-Yau threefold $f' : X' \to W$ birationally isomorphic to $f : X \to B$ such that 
$g_W \in \Aut (W)$
for $g_W \in {\rm Bir}\, (W)$ induced from $g_B \in {\rm Bir}\, (W)$. 
Moreover, $d_1(g_W) > 1$. If $g_B \in \Aut (B)$, then $d_1(g_B) >1$. 
\end{proposition}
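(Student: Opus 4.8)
The plan is to split the statement into a dynamical-degree computation and a regularization of the base map $g_B$, the latter being the substantive part. Throughout I keep in mind that $B$ is a $\Q$-factorial surface with at worst quotient singularities carrying a klt log-Calabi--Yau structure by Proposition \ref{prop23}.

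First I would pin down the dynamical degree of the base map. Resolving the indeterminacy of $f$ and a resolution of $B$, I obtain a surjective elliptic fibration $\tilde f : \tilde X \to \tilde B$ between smooth projective varieties, birational to $f$, together with lifts $\tilde g \in {\rm Bir}(\tilde X)$ and $\tilde g_B \in {\rm Bir}(\tilde B)$ with $\tilde f \circ \tilde g = \tilde g_B \circ \tilde f$. Since $\dim \tilde X - \dim \tilde B = 1$, Proposition \ref{prop24} gives $d_1(\tilde g) = d_1(\tilde g_B)$, while birational invariance of the first dynamical degree gives $d_1(\tilde g) = d_1(g) > 1$. Hence $d_1(g_B) > 1$; and when $g_B \in \Aut(B)$ this reads $d_1(g_B) > 1$ literally after identifying $d_1$ on $B$ with $d_1$ on an equivariant resolution via Proposition \ref{prop22}. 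The same computation will yield $d_1(g_W) = d_1(g) > 1$ once $W$ is produced.

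The heart of the matter is that $g_B$ need \emph{not} be biregular on the given $B$, because $f$ may fail to be equidimensional: over finitely many points of $B$ the fiber can be two-dimensional, and then $g_B$ may contract a curve $D \subset B$ into such a point without forcing $g$ to contract the divisor $f^{*}D$, so the naive codimension-one argument breaks down. To repair this I would replace $f : X \to B$ by a relatively minimal (``basic'') elliptic model in the sense of Nakayama \cite{Na02}: over a suitable birational model $W$ of $B$, again $\Q$-factorial with quotient singularities by Proposition \ref{prop23}, construct an equidimensional elliptic fibration $f' : X' \to W$ carrying the same generic fiber, $J$-map and monodromy data as $f$. Then $X'$ is a minimal elliptic threefold birational to $X$, hence isomorphic to $X$ in codimension one, so it is again a Calabi--Yau threefold, and $g$ transfers to $g' \in {\rm Bir}(f')$ inducing $g_W \in {\rm Bir}(W)$. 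On this equidimensional model the codimension-one argument now works: were $g_W$ to contract a curve $D \subset W$ to a point $q$, then $g'$ would send the divisor $f'^{*}D$ into the one-dimensional fiber $f'^{-1}(q)$, contradicting that $g'$ is isomorphic in codimension one (as $X'$ is minimal, \cite{Ka08}). Applying this to $g_W$ and $g_W^{-1}$, the map $g_W$ contracts no curve, hence is a small birational self-map of a normal projective surface; since a surface admits no small modifications, $g_W$ is biregular, i.e. $g_W \in \Aut(W)$. Together with the first paragraph, $d_1(g_W) = d_1(g) > 1$, as required.

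The main obstacle is precisely the construction in the previous paragraph: producing $f' : X' \to W$ that is simultaneously birational to $f$ as a fibration, equidimensional (so that the codimension-one argument becomes available), Calabi--Yau, and $g$-equivariant, with $W$ a good klt model of $B$. This is exactly where Nakayama's structure theory of elliptic threefolds \cite{Na02} is indispensable, and where $W$ must be chosen compatibly with the subsequent passage to a Type $II_0$ ($c_2$-contracted) model via Theorem \ref{thm24}.
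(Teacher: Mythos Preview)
Your proposal is correct and follows essentially the same route as the paper: pass to Nakayama's standard (equidimensional) model $f':X'\to W$, use that $g'$ is isomorphic in codimension one on the minimal Calabi--Yau $X'$ together with equidimensionality to force $g_W$ to be small on the surface $W$ and hence biregular, and deduce $d_1(g_W)=d_1(g)>1$ via Propositions \ref{prop22} and \ref{prop24}. The only point you should make explicit is why $X'$ is \emph{smooth} (not just minimal): the paper invokes Koll\'ar's theorem that threefold flops preserve smoothness \cite{Ko89}, which is needed before you can conclude that $X'$ is a Calabi--Yau threefold in the paper's sense.
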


\begin{proof} Let $f' : X' \to W$ be a standard model of $f : X \to B$ in the sense of Nakayama \cite[Definition A.2]{Na02}. Since $X$ is smooth projective, $K_X$ is trivial (hence nef) and $B$ is also projective, we may perform the same construction as in \cite[Appendix A]{Na02} (in which everything is formulated in the category of analytic varieties with dimension $\le 3$) just by running a usual minimal model program in the category of complex projective varieties of dimension $\le 3$. Thus, in the definition of a standard model in \cite[Definition A.2]{Na02}, all locally projective morphisms are chosen to be projective morphisms, a standard model $f' : X' \to W$ exists in the category of projective varieties, $X'$ is a minimal projective variety birational to $X$ and $W$ is a normal projective surface, for our $f : X \to B$. Since $X'$ and $X$ are both minimal and they are birational, any birational map $X \dasharrow X'$ is isomorphic in codimension one and is decomposed into finitely many flops (\cite{Ka08}). Since $X$ is a smooth Calabi-Yau threefold, $X'$ is then a smooth Calabi-Yau threefold, as flops of threefolds preserves the smoothness \cite[Theorem 2.4]{Ko89}, as well as triviality of the canonical divisor and the fundamental group (as a flop is a birational map which is isomorphic in codimension one). 

Let $g' \in {\rm Bir}\, (X')$ be the birational automorphism of $X'$ induced from $g \in {\rm Bir}\, (X)$. Since $X'$ is a Calabi-Yau threefold, for the same reason as above, birational maps
$$g' : X' \dasharrow X'\,\, {\rm and}\,\, (g')^{-1} : X' \dasharrow X'$$ 
are isomorphic in codimension one, that is, none of $g'$ and $(g')^{-1}$ contracts a divisor to a curve or a point. Since $f'$ is equidimensional, it follows that none of $g_{W}$ and $g_{W}^{-1}$, the birational automorphisms of $W$ induced from $g'$ and $(g')^{-1}$, contracts a prime divisor to a point, either. Thus $g_W : W \dasharrow W$ is also isomorphic in codimension one. Since $W$ is a normal projective surface, it follows that $g_W : W \dasharrow W$ is then an isomorphism by the Zariski main theorem. Hence $g_W \in \Aut (W)$ as claimed.

Let us show that $d_1(g_W) > 1$. Let $\nu : \tilde{W} \to W$ be an equivariant resolution 
with respect to $g_W$ and $\tilde{g}_{W} \in \Aut (\tilde{W})$ the induced automorphism. Let $\tilde{f} : X \dasharrow \tilde{W}$ be the dominant rational map induced from $f$ and $\nu$. Then 
$$d_1(g_W) = d_1(\tilde{g}_W)$$ 
by Proposition \ref{prop22}. Since $\dim X - \dim W = 1$, it follows from Proposition \ref{prop24} that 
$$d_1(\tilde{g}_W) = d_1(g) > 1.$$ 
Hence $d_1(g_W) = d_1(g) > 1$ as claimed.

If $g_{B} \in \Aut (B)$, then we may apply the same argument for an equivariant resolution $\tilde{B} \to B$
with respect to $g_B$ and $\tilde{g}_{B} \in \Aut (\tilde{B})$ to obtain $d_1(g_B) = d_1(g_{\tilde{B}}) = d_1(g) > 1$. 

This completes the proof of Proposition \ref{prop41}. 
\end{proof}

In what follows, let $f' : X' \to W$ be as in Proposition \ref{prop41} when $g \in {\rm Bir}\, (f) \setminus \Aut (f)$ and let $f' : X' \to W$ be $f : X \to B$ when $g \in \Aut (f)$. By Proposition \ref{prop41}, in both cases, $f' : X' \to W$ is an elliptically fibered Calabi-Yau threefold with $g' \in {\rm Bir}\, (f')$ and $g_W \in \Aut(W)$ with $d_1(g_W) >1$ for $g_W \in {\rm Bir}\, (W)$ induced from $g'$.

By Proposition \ref{prop23} applied for $f' : X' \to W$, we have an effective $\Q$-divisor $\Delta$ such that $-K_W$ is $\Q$-linearly equivalent to $\Delta$. We fix one of such $\Delta$. Let    
$$-K_W \sim_{\Q} \Delta = P + N$$
 be the Zariski decomposition of an effective $\Q$-divisor $\Delta$ in $N^1(W)_{\Q}$ such that $P$ is a nef $\Q$-divisor class and $N$ is an effective $\Q$-divisor class (See \cite[Theorem 7.4, Corollary 7.5]{Sa84} for the definition and basic properties of the Zariski decomposition. See also \cite{Ba09}). Note that the Zariski decomposition for an effective $\Q$-divisor $\Delta$ exists and it is also unique in the sense that if $\Delta = P'+D'$ is another Zariski decomposition of $\Delta$ in $N^1(W)_{\Q}$, then $P' = P$ in $N^1(W)_{\Q}$ and $D' = D$ not only as elements of $N^1(W)_{\Q}$ but also {\it as $\Q$-divisors} (See \cite[proof of Corollary 7.5]{Sa84}, which is based on the uniqueness in \cite[Theorem 7.4]{Sa84}). For this reason, we may and will identify the numerically equivalent class of $N$ with the unique effective $\Q$-divisor representing the class $N$, and we may thus speak of {\it the irreducible decomposition of} $N$:
$$N = \sum_{i=1}^{k} a_iN_i,$$
where $N_i$ are mutually different irreducible curves and $a_i$ are positive rational numbers. Note that the intersection matrix $((N_i.N_j)_W)_{i, j}$ is of negative definite unless $N = 0$ (\cite[Corollary 7.5]{Sa84}).

Following a request from the referee, we remark the next fact with proof:

\begin{lemma}\label{lem41} Under the notation above, if $h \in \Aut (W)$, then $h^*N = N$ as effective $\Q$-divisors (not only as elements of $N^1(W)_{\Q}$). 
\end{lemma}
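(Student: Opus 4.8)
The plan is to exploit the uniqueness of the Zariski decomposition together with the fact that pullback by an automorphism preserves nef-ness and effectiveness. The key observation is that an automorphism $h \in \Aut(W)$ acts on $N^1(W)_{\Q}$ preserving the intersection pairing, the nef cone, and the cone of effective classes, and that it sends a Zariski decomposition to a Zariski decomposition of the pulled-back class.

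First I would record that $h^*(-K_W) = -K_W$ in $N^1(W)_{\Q}$, since $K_W$ is an intrinsic canonical class and $h$ is a biregular automorphism. Equivalently, because $-K_W \sim_{\Q} \Delta$ and $12(K_W + \Delta') \sim 0$ forces $-K_W$ to be numerically trivial-modulo-$\Delta$, the class $[-K_W] \in N^1(W)_{\Q}$ is $h$-invariant. Applying $h^*$ to the equation $-K_W \sim_{\Q} P + N$ then yields
$$
-K_W \sim_{\Q} h^*P + h^*N \quad \text{in } N^1(W)_{\Q}.
$$
I would then argue that $h^* P + h^* N$ is itself a Zariski decomposition of $-K_W$: indeed $h^*P$ is nef (pullback of a nef class by an automorphism is nef), $h^*N$ is effective (the pullback of the effective $\Q$-divisor $N = \sum a_i N_i$ is $\sum a_i\, h^*N_i$, with each $h^*N_i$ an irreducible curve and each $a_i > 0$), and the defining orthogonality condition $(P.N_i)_W = 0$ for all $i$ transforms under $h^*$ into $(h^*P . h^*N_i)_W = (P.N_i)_W = 0$ because $h^*$ preserves the intersection form. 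Likewise negative-definiteness of the intersection matrix of the components of $N$ is preserved, so $h^*N$ has the same property required of the negative part.

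By the uniqueness of the Zariski decomposition recalled in the excerpt (following \cite[proof of Corollary 7.5]{Sa84}, which gives uniqueness not merely in $N^1(W)_{\Q}$ but \emph{as $\Q$-divisors}), the decomposition of $-K_W$ is unique as a pair of $\Q$-divisors. Hence $h^*P = P$ and, crucially, $h^*N = N$ \emph{as effective $\Q$-divisors}, which is exactly the claim.

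The main subtlety — rather than a genuine obstacle — is justifying that $h^* N$ is literally the same $\Q$-divisor, not just numerically equivalent to $N$. This is where I would lean precisely on the stronger form of uniqueness in \cite{Sa84}: since $N$ is the canonical representative of its numerical class (negative part of a Zariski decomposition is determined as a divisor, not only up to numerical equivalence), and $h^*N$ is the negative part of a Zariski decomposition of the same class $-K_W$, the two $\Q$-divisors must coincide. I would make sure to emphasize that $h$ being an \emph{automorphism} (not merely a birational map) is what guarantees $h^*N_i$ is again an honest irreducible curve and that the intersection pairing is preserved exactly, so that the transported data genuinely satisfies all the axioms of a Zariski decomposition.
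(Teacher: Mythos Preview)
Your proof is correct and follows essentially the same approach as the paper: both arguments pull back the Zariski decomposition by $h$ and invoke the strong uniqueness of the negative part (as a $\Q$-divisor, not merely numerically) from \cite{Sa84}. The only cosmetic difference is that the paper anchors the argument to the fixed effective representative $\Delta$ rather than to the numerical class $-K_W$, writing $\Delta = (h^*P + \Delta - h^*\Delta) + h^*N$ so that the uniqueness is applied literally to a decomposition of the same effective $\Q$-divisor $\Delta$; your version applies uniqueness at the level of the numerical class $[-K_W]$, which is equally valid since the negative part of a Zariski decomposition depends only on the numerical class.
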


\begin{proof} Note that in general $h^*\Delta \not= \Delta$ as $\Q$-divisors. However, since the Weil divisor $h^{*}(-K_W)$ is linearly equivalent to $-K_W$ and since $\Delta$ is $\Q$-linearly equivalent to $-K_W$, it follows that $h^{*}\Delta$ is $\Q$-linearly equivalent to $\Delta$. Thus $\Delta - h^*\Delta = 0$ in $N^1(W)_{\Q}$. Note also that $h^*N$ is effective as $N$ is effective and $h^*|_{N^1(W)_{\Q}}$ preserves the intersection form as $h \in \Aut (W)$. Therefore
$$\Delta = (h^{*}P + \Delta - h^*\Delta) + h^*N\,\, {\rm in}\,\, N^1(W)_{\Q}$$
is also the Zariski decomposition of $\Delta$. Thus $h^{*}N = N$ as $\Q$-divisors by the uniquness of the Zariski decomposition of an effective $\Q$-divisor $\Delta$ as explained above. This completes the proof.
\end{proof}

\begin{lemma}\label{lem42} Under the notation above, $P=0$ and $-K_W = N$ in $N^1(W)_{\Q}$.
\end{lemma}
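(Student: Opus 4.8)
The plan is to show first that the positive part $P$ of the Zariski decomposition is fixed by $g_W^*$, and then to rule out a nonzero nef class with this property by exploiting the hyperbolic geometry of $N^1(W)_{\R}$ that is forced by $d_1(g_W) > 1$. The final equality $-K_W = N$ in $N^1(W)_{\Q}$ then follows at once from $-K_W \sim_{\Q} \Delta = P + N$.

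First I would run the computation of Lemma \ref{lem41} with $h = g_W$ and extract one extra piece of information from it. That computation shows that $\Delta = (g_W^*P + \Delta - g_W^*\Delta) + g_W^*N$ is again a Zariski decomposition of $\Delta$, and its uniqueness gives $g_W^*N = N$ as $\Q$-divisors. Since $g_W^*(-K_W)$ is linearly equivalent to $-K_W$, we moreover have $g_W^*\Delta \sim_{\Q} \Delta$, so $\Delta - g_W^*\Delta = 0$ in $N^1(W)_{\Q}$; reading off the positive part of the two decompositions then yields $g_W^*P = P$ in $N^1(W)_{\Q}$, hence in $N^1(W)_{\R}$. Thus $P$ is a nef, $g_W^*$-invariant class.

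Next I would produce the relevant null eigenvector. Passing to an equivariant resolution $\nu : \tilde{W} \to W$ with induced $\tilde{g}_W \in \Aut(\tilde{W})$, Proposition \ref{prop22} gives $d_1(\tilde{g}_W) = d_1(g_W) =: \lambda > 1$, and Theorem \ref{thm21} (2) supplies a nonzero nef class $v \in {\rm Nef}\,(\tilde{W})$ with $\tilde{g}_W^*v = \lambda v$. On the smooth surface $\tilde{W}$ the intersection form has signature $(1, \rho(\tilde{W})-1)$ and $\tilde{g}_W^*$ is an isometry of it, so the relations $(v^2)_{\tilde{W}} = \lambda^2 (v^2)_{\tilde{W}}$ and $(\nu^*P . v)_{\tilde{W}} = \lambda (\nu^*P . v)_{\tilde{W}}$ (the latter using $\tilde{g}_W^*\nu^*P = \nu^* g_W^* P = \nu^*P$) force $(v^2)_{\tilde{W}} = 0$ and $(\nu^*P . v)_{\tilde{W}} = 0$, since $\lambda \neq 1$.

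Finally, the Hodge index theorem on $\tilde{W}$ says a class orthogonal to the nonzero nef isotropic class $v$ has non-positive self-intersection, with equality only for multiples of $v$. As $\nu^*P$ is nef, $(\nu^*P)^2_{\tilde{W}} = (P^2)_W \ge 0$, so $(P^2)_W = 0$ and $\nu^*P = cv$ for some $c \ge 0$; but then $\nu^*P = \tilde{g}_W^*(\nu^*P) = \lambda c v$ with $\lambda \neq 1$ forces $c = 0$, i.e. $\nu^*P = 0$ and hence $P = 0$ by injectivity of $\nu^*$. Therefore $-K_W \sim_{\Q} \Delta = N$, giving $-K_W = N$ in $N^1(W)_{\Q}$. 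The two points that require care are the $g_W^*$-invariance of the \emph{positive} part $P$ (beyond the invariance of $N$ already recorded in Lemma \ref{lem41}), and the reduction to the smooth surface $\tilde{W}$ so that the Perron--Frobenius statement of Theorem \ref{thm21} and the Hodge index theorem apply in their standard form; both are routine but are where the argument could slip if $W$ is treated directly as a singular surface.
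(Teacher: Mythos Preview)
Your proof is correct and follows essentially the same route as the paper: establish $g_W^*P = P$ from the uniqueness of the Zariski decomposition (exactly as in Lemma~\ref{lem41}), produce a nef eigenvector for the eigenvalue $d_1(g_W)>1$, use the intersection relation to get orthogonality, and then invoke the Hodge index theorem plus the eigenvalue mismatch to force $P=0$. The only difference is cosmetic: you pass to an equivariant resolution $\tilde{W}$ to apply Theorem~\ref{thm21} and the Hodge index theorem on a smooth surface, whereas the paper works directly on the normal $\Q$-factorial surface $W$ (implicitly relying on Proposition~\ref{prop22} and the Hodge index theorem for $\Q$-Cartier divisors on normal surfaces).
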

\begin{proof} Since $g_W^*(-K_W) = -K_W$ in $N^1(W)_{\Q}$, it follows that $g_W^{*}P = P$ in $N^1(W)_{\Q}$ by Lemma \ref{lem41}. On the other hand, by Theorem \ref{thm21}, there is a non-zero nef $\R$-divisor class $D_W$ such that $g_W^*D_W = d_1(g_W)D_{W}$. 
Then
$$(P.D_W)_W = (g_W^*P.g_W^*D_W)_W = d_1(g_W)(P.D_W)_W.$$
Since $d_1(g_W) >1$ by Proposition \ref{lem41}, it follows that $(P.D_W)_W = 0$. Then nef classes $P$ and $D_W$ has to be proportional by the Hodge index theorem. On the other hand, since 
$$d_1(g_W) \not=1\,\, ,\,\, g_W^*D_W = d_1(g_W)D_W\,\, ,\,\, g_W^*P = P\,\, ,\,\, D_W \not= 0$$ 
in $N_1(W)_{\R}$, it follows that $P = 0$ in $N^1(W)_{\Q}$ and therefore $-K_W = N$ in $N^1(W)_{\Q}$. 
\end{proof}

Since klt is an open condition with respect to the coefficients of the boundary divisor, the pair $(W,E:= (1+\epsilon) N )$ with sufficiently small $\epsilon \in \Q_{>0}$ is also klt and 
$$K_W + E = \epsilon N = \sum_{i=1}^{k}\epsilon a_i N_i$$
in $N^1(W)_{\Q}$. Let us run the minimal model program with respect to $K_W+E$ (See eg. \cite[Theorem 3.47, Corollary 3.43 (1)]{KM98} for the minimal model program for normal projective surfaces with klt as singularities). Since the intersection matrix $((N_i.N_j)_W)_{i, j}$ is of negative definite, the output is not a Mori fiber space but the birational contraction $\tau : W \to B'$ such that 
$$\tau_*(\epsilon N) = 0$$ 
and under $\tau$, 
$$W \setminus {\rm Supp}\, N \simeq B' \setminus T,$$
where $T := \tau ({\rm Supp}\, N)$. Note that $T$ is a finite set of points by $\tau_*(\epsilon N) = 0$.   
In particular, $\tau_*(N) = 0$ and therefore 
$$K_{B'} = \tau_*K_W = \tau_*N = 0$$ 
in $N^1(B')_{\Q}$, as $K_{B'} = \tau_*K_W$ as Weil divisors. The composition $\tau \circ f' : X' \to B'$ is then an elliptically fibered Calabi-Yau threefold with numerically trivial $K_{B'}$. Hence this is an elliptically fibered Calabi-Yau threefold of Type $II_0$ by Proposition \ref{prop23}. 

Here we note that $\tau \circ f' : X' \to B'$ is nothing but $\tau \circ f : X \to B \to B'$ when $g \in \Aut (f)$. 

The next lemma will then complete the proof of Theorem \ref{mainthm2}.

\begin{lemma}\label{lem51} $g_{B'} : = \tau \circ g_{W} \circ \tau^{-1} \in 
\Aut (B')$ and $d_1(g_{B'}) > 1$.
\end{lemma}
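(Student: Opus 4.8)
The plan is to establish the two assertions in turn: first that the a priori merely birational self-map $g_{B'} = \tau \circ g_W \circ \tau^{-1}$ is in fact biregular, and then that its first dynamical degree agrees with that of $g_W$, by comparing the two through a common equivariant resolution. First I would pin down how $g_W$ interacts with the contraction $\tau : W \to B'$. Applying Lemma \ref{lem41} to $h = g_W \in \Aut (W)$ gives $g_W^*N = N$ as effective $\Q$-divisors. Since $g_W$ is an automorphism, this forces $g_W$ to permute the irreducible components $N_i$ of $N$; in particular $g_W$ preserves the support setwise, $g_W({\rm Supp}\, N) = {\rm Supp}\, N$. Recall that $\tau$ contracts exactly ${\rm Supp}\, N$ and restricts to an isomorphism $W \setminus {\rm Supp}\, N \simeq B' \setminus T$ with $T = \tau({\rm Supp}\, N)$ a finite set. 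Hence $g_W$ restricts to an automorphism of $W \setminus {\rm Supp}\, N$, and conjugating by $\tau$ shows that $g_{B'}$ restricts to an automorphism of $B' \setminus T$. Thus $g_{B'}$ is a birational self-map of $B'$ that is an isomorphism away from the finite set $T$, so it is isomorphic in codimension one, and the same holds for its inverse.

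Since $B'$ is a normal projective surface, the same Zariski's main theorem argument already used in the proof of Proposition \ref{prop41} then upgrades $g_{B'}$ to a biregular automorphism: a birational self-map of a normal projective surface contracting no curve, whose inverse also contracts no curve, is an isomorphism. This gives $g_{B'} \in \Aut (B')$.

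It remains to compute $d_1(g_{B'})$. Here I would fix an equivariant projective resolution $\mu : \tilde{Z} \to W$ with respect to $g_W$ and let $\tilde{g} \in \Aut (\tilde{Z})$ be the induced automorphism. As $\tau$ is equivariant, $\tau \circ \mu : \tilde{Z} \to B'$ is again a projective birational morphism from a smooth surface, and the identity
$$g_{B'} \circ (\tau \circ \mu) = \tau \circ g_W \circ \mu = (\tau \circ \mu) \circ \tilde{g}$$
shows that $\tau \circ \mu$ is an equivariant resolution of $B'$ with respect to $g_{B'}$, with the very same induced automorphism $\tilde{g}$. Applying Proposition \ref{prop22} to the two resolutions $\mu$ and $\tau \circ \mu$ then yields
$$d_1(g_{B'}) = d_1(\tilde{g}) = d_1(g_W),$$
and since $d_1(g_W) > 1$ by Proposition \ref{prop41}, we conclude $d_1(g_{B'}) > 1$.

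The main obstacle is the first part, namely guaranteeing that $g_{B'}$ is genuinely biregular rather than only birational. This hinges entirely on $g_W$ preserving the $\tau$-exceptional locus ${\rm Supp}\, N$, which is exactly the content of Lemma \ref{lem41}; once that is secured, both the Zariski main theorem argument for biregularity and the dynamical-degree identity via Proposition \ref{prop22} are routine.
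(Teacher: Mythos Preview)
Your proof is correct and follows essentially the same approach as the paper: both use Lemma~\ref{lem41} to see that $g_W$ preserves ${\rm Supp}\,N$, deduce that $g_{B'}$ is isomorphic in codimension one and hence biregular by Zariski's main theorem, and then compare dynamical degrees through a common equivariant resolution via Proposition~\ref{prop22}. Your treatment of the dynamical-degree identity, making the common resolution $\tilde{Z}\to W\to B'$ explicit, is slightly more spelled out than the paper's one-line citation of Propositions~\ref{prop22} and~\ref{prop41}, but the underlying argument is identical.
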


\begin{proof} By Lemma \ref{lem41}, $(g_W^{\pm 1})^{*}(N) = N$ as $\Q$-divisors. Thus $g_W^{\pm 1}({\rm Supp} N) = {\rm Supp} N$. Hence $g_{B'}^{\pm 1}$ are isomorphisms on $B' \setminus T$ for $g_{B'} \in {\rm Bir}\, (B')$ induced from $g_W \in {\rm Bir}\, (W)$. In particular, $g_{B'} : B' \dasharrow B'$ is isomorphic in codimension one. Since $B'$ is a normal projective surface, it follows that $g_{B'} B' \to B'$ is an isomorphism, that is, $g_{B'} \in \Aut (B')$, by the Zariski main theorem. We also have 
$$d_1(g_{B'}) = d_1(g_W) = d_1(g_{\tilde{W}}) >1$$ 
by Propositions \ref{prop22} and  \ref{prop41}. This completes the proof of Lemma \ref{lem51}.
\end{proof} 

This completes the proof of Theorem \ref{mainthm2}.     

%%%%%%%%

\end{document}